% SIAM Article Template
\documentclass[onefignum,onetabnum]{siamart171218}

\usepackage{amsopn}

\usepackage{amssymb,latexsym,amsmath,enumerate,verbatim,amsfonts,mathdots,mathabx}
\usepackage{graphicx}
\usepackage{subcaption}
\usepackage{algorithm}
\usepackage{algorithmic}

\usepackage{color}
\usepackage{multirow}
\usepackage{caption}
\usepackage{tikz}
\usepackage{bm}
\usepackage{comment}
\usepackage{xcolor}

\usepackage{lipsum}
\usepackage{epstopdf}
\usepackage{microtype} % Added by TKP
\usepackage{listings} % Added by TX
\lstset{
  basicstyle=\ttfamily,
  mathescape
}

\definecolor{navyblue}{rgb}{0, 0, 0.5}

\def\ker{{\rm ker}}
\def\Range{{\rm Range}}

\def\H{{\mathcal{H}}}
\def\A{{\mathcal{A}}}
\def\L{{\mathcal{L}}}

\def\P{{\mathcal{P}}}

\def\dist{{\rm dist}}
\def\R{{\rm I\!R}}
\def\rank{{\rm rank}}
\def\dom{{\rm dom}}

\def\argmin{\mathop{\rm arg\,min}}
\def\nn{{\nonumber}}

% Packages and macros go here

%\usepackage{refcheck}

\ifpdf
  \DeclareGraphicsExtensions{.eps,.pdf,.png,.jpg}
\else
  \DeclareGraphicsExtensions{.eps}
\fi

% Add a serial/Oxford comma by default.

% Used for creating new theorem and remark environments
\newsiamremark{remark}{Remark}
\newsiamremark{conjecture}{Conjecture}
\crefname{hypothesis}{Hypothesis}{Hypotheses}
\newsiamthm{claim}{Claim}

% Sets running headers as well as PDF title and authors
\headers{Hybrid method for problems with multiple rank constraints}{T. Liu, I. Markovsky, T. K. Pong and A. Takeda}

\title{A  hybrid penalty method for a class of optimization problems with multiple rank constraints}

% Authors: full names plus addresses.
\author{Tianxiang Liu\thanks{RIKEN Center for Advanced Intelligence Project, 1-4-1, Nihonbashi, Chuo-ku, Tokyo 103-0027, Japan
 (\email{tianxiang.liu@riken.jp}).}
  \and Ivan Markovsky\thanks{Department ELEC, Vrije Universiteit Brussel (VUB), Pleinlaan 2, 1050 Brussels, Belgium
 (\email{imarkovs@vub.ac.be}).}
  \and Ting Kei Pong\thanks{Department of Applied Mathematics, The Hong Kong Polytechnic University, Hong Kong. This author was supported partly by Hong Kong Research Grants Council PolyU153004/18p.
 (\email{tk.pong@polyu.edu.hk}).}
  \and Akiko Takeda\thanks{Department of Creative Informatics, Graduate School of Information Science and Technology, the University of Tokyo, Tokyo, Japan
 (\email{takeda@mist.i.u-tokyo.ac.jp}),
 RIKEN Center for Advanced Intelligence Project, 1-4-1, Nihonbashi, Chuo-ku, Tokyo 103-0027, Japan
 (\email{akiko.takeda@riken.jp}).}}

\begin{document}

\maketitle

% REQUIRED
\begin{abstract}
  In this paper, we consider the problem of minimizing a smooth objective over multiple rank constraints on Hankel-structured matrices. This kind of problems arises in system identification, system theory and signal processing, where the rank constraints are typically ``hard constraints". To solve these problems, we propose a hybrid penalty method that combines a penalty method with a post-processing scheme. Specifically, we solve the penalty subproblems until the penalty parameter reaches a given threshold, and then switch to a local alternating ``pseudo-projection'' method to further reduce constraint violation. Pseudo-projection is a generalization of the concept of projection. We show that a pseudo-projection onto a {\em single} low-rank Hankel-structured matrix constraint can be computed efficiently by existing softwares such as SLRA (\hyperlink{solver}{Markovsky and Usevich, 2014}), under mild assumptions. We also demonstrate how the penalty subproblems in the hybrid penalty method can be solved by pseudo-projection-based optimization methods, and then present some convergence results for our hybrid penalty method. Finally, the efficiency of our method is illustrated by numerical examples.
\end{abstract}

% REQUIRED
\begin{keywords}
Hankel-structure, system identification, hybrid penalty method, pseudo-projection.
\end{keywords}

% REQUIRED
\begin{AMS}
15B05, 90C30.
\end{AMS}

\section{Introduction}
Many data modeling problems can be posed and solved as \emph{structured low-rank approximation} problems, \emph{i.e.}, problems of approximating matrices by preserving the structure but reducing the rank \cite{Markovsky19}. The to-be-approximated matrices are constructed from data and the model's complexity is related to the rank of the approximation---the lower the rank, the simpler the model. However, the simpler the model is, the higher the approximation error is. One way to deal with this fundamental trade-off between model complexity and model accuracy is to solve a sequence of low-rank approximation problems with increasing bounds on the rank.

In static linear data modeling problems, \emph{i.e.}, models defined by linear algebraic equations, the data matrices are \emph{unstructured.} All spectral and Fr\"{o}benius norm optimal unstructured low-rank approximations can be obtained from truncation of the singular value decomposition \cite{GV96}. This result, known as the Eckart--Young--Mirsky theorem \cite{EY36}, is at the heart of dimensionality reduction methods in machine learning \cite{SC04}. Unstructured low-rank approximation is equivalent to the principal component analysis in statistics and the total least squares in numerical linear algebra~\cite{MV07}.

The object of system theory, control, and signal processing is dynamical models. In linear time-invariant data modeling problems, \emph{i.e.}, for models defined by linear constant-coefficient difference equations, the data matrix is \emph{Hankel structured} \cite{CDLP05, IUM14,Markovsky08,MU13}. To see this, consider a system defined by the equation
\vspace{-2mm}
\begin{equation*}
p_0 y(t) + p_1 y(t+1) + \cdots + p_sy(t+s) = 0, \ \  {\rm for}\ \ t = 1,\ldots,T-s.
\end{equation*}
By definition, the time series $y = [y(1),\ldots,y(T)]^\top\in\R^T$ is a trajectory of the system if
\vspace{-3mm}
\begin{equation*}
p \H_{s+1}(y) = 0,
\end{equation*}
where $p := [ p_0 \ p_1 \ \cdots \ p_s] \neq 0$ is the parameter vector of the system and
\begin{equation*}
\H_{s+1}(y) :=
\begin{bmatrix}
y(1) & y(2) & y(3) & \cdots &  y(T-s) \\
y(2) & y(3) &    \iddots          & & y(T-s+1) \\
y(3) &  \iddots            &       &              &  \vdots\\
\vdots              &               &      &        &  \\
y(s+1) & y(s+2)             &  \cdots                  &  & y(T)
\end{bmatrix}
\end{equation*}
is a Hankel matrix, constructed from the time series. Therefore, $\rank(\H_{s+1}(y)) \le s$. The resulting Hankel structured low-rank approximation problem does not admit an analytic solution in terms of the singular value decomposition. For this reason, numerous local optimization \cite{Markovsky14} as well as convex relaxation \cite{Fazel02} methods are proposed for solving it.

In this paper, we consider a generalization of the Hankel structured low-rank approximation problem to \emph{multiple rank constraints.} An application that motivates this generalization is the common dynamics estimation problem in multi-channel signal processing \cite{MFG18, MLT19, PDV06}. Modeling each channel separately requires an individual rank constraint of a Hankel matrix in the optimization problem. Imposing the assumption that the channels have common dynamics then leads to an additional (coupling) rank constraint. The problem of common dynamics estimation is closely related to the problem of approximate common factor computation of multiple polynomials in computer algebra \cite{KL98, UM17}. Specifically, we consider the following optimization problem with multiple rank constraints:
\begin{eqnarray}
\min_{y_1,\cdots, y_N\in\R^{n}} & &\ \ f(y)\nn\\
{\rm s.t.} & &\ \ \rank(\H_{r_i + 1}(y_i)) \le r_i, \ \ \ i = 1,\ldots, N,\label{multi_model}\\
& & \ \ \rank\left([\H_{r+1}(y_1)\ \H_{r+1}(y_2)\ \cdots\ \H_{r+1}(y_N)]\right) \le r,\nn
\end{eqnarray}
where $y = vec(y_1\cdots y_N)$ (see Section~\ref{notation} for notation), $r_i$ and $r$ are positive integers satisfying $r_i \le r \le \lfloor \frac{n-1}2\rfloor$ ($i = 1, \ldots, N$), and $f$ represents the loss function, which is nonnegative, level-bounded and smooth with Lipschitz continuous gradient. For example, $f(y) = \frac12\|y - \widebar{y}\|^2$, where $\widebar{y}\in\R^{Nn}$ is the noisy observation signal.

For constrained problems such as \eqref{multi_model} with smooth objectives, a classical solution method is the gradient projection algorithm, whose iterations require projections onto the feasible set. However, the coupling structure of the last constraint in \eqref{multi_model} makes projection onto the feasible set a challenging problem: indeed,  even the projection onto the set defined by {\em each} single constraint in \eqref{multi_model} does not admit a closed-form solution. Thus, variants of proximal gradient algorithms cannot be directly applied to solving \eqref{multi_model}. Fortunately, we can show that one can obtain a so-called ``pseudo-projection" (see Definition~\ref{def_ps}) onto the set defined by {\em each} single constraint by some existing solvers such as SLRA \cite{MK14}, under mild assumptions.

Motivated by this, we adopt a penalty approach and construct penalty subproblems whose feasible regions are either $\R^n$, or defined by either the first $N$ constraints or the last constraint in \eqref{multi_model}: the pseudo-projections are easy to compute in all these cases. We then propose an algorithm vNPG$_{\rm major}$ for the penalty subproblems, making explicit use of the difference-of-convex (DC) structure of the penalty functions. The algorithm vNPG$_{\rm major}$ is a variant of NPG$_{\rm major}$ in \cite[Algorithm~2]{LPT18} and is based on computing pseudo-projections, which can be done efficiently for the feasible region of the penalty subproblems.

While approximate solutions to \eqref{multi_model} can now be obtained by our penalty method, such solutions are typically {\em not} feasible for \eqref{multi_model}. This is not ideal for applications such as system identification in which solution feasibility is an important concern \cite{Markovsky08}. Even though constraint violation can theoretically be reduced via solving a sequence of penalty subproblems with increasing weights in the penalty functions, in practice this strategy results in high computational cost and numerical instability. To resolve this issue, we shift to a post-processing method after obtaining a moderately accurate solution by our penalty method. Specifically, starting from such a solution obtained from the penalty method, we apply an alternating pseudo-projection method, alternating between the set defined by the first $N$ constraints in \eqref{multi_model} and that defined by the last constraint there, to reduce constraint violation.

%As a result, in this paper, we propose a hybrid penalty method for solving \eqref{multi_model}, which consists of a penalty method and a local post-processing scheme. In the penalty method, we give a uniform penalty scheme with three variants of formulations. Furthermore, using the difference-of-convex (DC) structure of the penalty functions, we propose an algorithm vNPG$_{\rm major}$ for the subproblem of the penalty method. The algorithm vNPG$_{\rm major}$ is a variant of NPG$_{\rm major}$ in \cite[Algorithm~2]{LPT18}, with the subproblem solved approximately but not globally. Specifically, we introduce so-called ``pseudo-projection'' to solve the subproblem of vNPG$_{\rm major}$ and also demonstrate how to obtain the pseudo-projection by existing solver SLRA in \cite{MK14}. When the penalty parameter of the penalty method reaches a threshold, we switch to a local post-processing method. In the post-processing method,

Our main contributions are highlighted as follows:
\begin{itemize}
\item We propose a hybrid penalty method (Algorithm~\ref{alg_hyb}) for solving \eqref{multi_model}: A penalty scheme allowing three different kinds of penalty subproblems, followed by an alternating pseudo-projection method for post-processing. An algorithm, vNPG$_{\rm major}$ (Algorithm~\ref{alg_npg}), is proposed for the penalty subproblems.
\item We prove some convergence results for the hybrid penalty method, including an error bound for the penalty method (Theorem~\ref{errbd_thm}) and the convergence rate for the alternating pseudo-projection method (Theorem~\ref{thm_ap}).
\item We demonstrate how a pseudo-projection can be obtained by the solver SLRA \cite{MK14} in Section~\ref{pseudo_sec}, under mild assumptions.
%\item We conduct some numerical examples to illustrate the efficiency of our hybrid penalty method in Section~\ref{sec_numerical}.
\end{itemize}

The rest of this paper is organized as follows. In Section~\ref{notation}, we introduce notation and some basic properties of Hankel operators. The hybrid penalty method and the corresponding convergence analysis are presented in Section~\ref{sec_hybrid}. In Section~\ref{sec_pseudo}, we demonstrate how to compute pseudo-projections. Numerical simulation results are presented in Section~\ref{sec_numerical}. Finally,  we give some concluding remarks in Section~\ref{sec_conclusion}.

\section{Notation and preliminaries}\label{notation}
Throughout this paper, we let $\R^n$ denote the $n$-dimensional Euclidean space and $\|\cdot\|$ denote the Euclidean norm induced by vector inner product $\langle\cdot,\cdot\rangle$.  For an $x\in\R^n$, we let $x(i)$ denote its $i$th entry. For vectors $y_1,\cdots, y_N\in\R^n$, we let $vec\left(y_1 \cdots\, y_N\right):=[y_1^\top \cdots\ y_N^\top]^\top \in\R^{Nn}$.
Given a matrix $A\in\R^{m\times n}$, we let $\|A\|_F$ denote its Fr\"{o}benius norm, $\|A\|_2$ denote its spectral norm, $A^\top$ denote its transpose and $A(i,j)$ denote its $(i,j)$th entry.
For $A$, $B\in\R^{m\times n}$, we denote the matrix inner product by $\langle A, B\rangle: = \sum_{i=1}^m\sum_{j=1}^nA(i,j)B(i,j)$. For a linear operator $\A$, we use $\A^*$, $\Range(\A)$ and $\ker(\A)$ to denote its adjoint, range and kernel, respectively.

For an extended-real-valued function $h: \R^n\rightarrow\R\cup\{\infty\}$, we say that $h$ is proper if $\dom\,h := \{x: h(x) < \infty\}\neq\emptyset$, and is closed if it is lower semi-continuous. Following \cite[Definition~8.3]{RW98}, for a proper closed function $h: \R^n\rightarrow\R\cup\{\infty\}$, the regular subdifferential of $h$ at $y\in {\rm dom}\,h$ is defined as
\vspace{-1 mm}
\begin{equation*}
\widehat\partial h(y):= \Bigg\{u:\;\liminf_{\begin{subarray}
  \ v\to  y\\
    v\neq y
\end{subarray}}\frac{h(v) - h(y) - {u}^\top(v - y)}{\| v - y\|}\ge 0\Bigg\},
\end{equation*}
and the (limiting) subdifferential of $h$ at $y\in {\rm dom}\,h$ is defined as
\vspace{-1 mm}
\begin{equation*}
\partial h(y):= \{u:\exists u^t \to  u, y^t \stackrel{h}{\to} y \, \mbox{with} \, u^t\in \widehat\partial h(y^t)\, \mbox{for each $t$}\},\vspace{-2 mm}
\end{equation*}
where $y^t \stackrel{h}{\to} y$ means both $h(y^t)\to h(y)$ and $y^t \to y$.
We say that $\bar y$ is a stationary point of $h$ if $0\in \partial h(\bar y)$. It is known from \cite[Theorem~10.1]{RW98} that any local minimizer of $h$ is a stationary point.

For a nonempty closed set $\Omega\subseteq\R^n$, we let $\delta_\Omega$ denote the indicator function of $\Omega$, which is zero in $\Omega$ and is infinity otherwise. %define the indicator function of $\Omega$ as
%\begin{equation*}
%  \delta_{\Omega}(Y): =
%  \begin{cases}
%  0 & {\rm if}\  Y\in \Omega,\\
%  \infty & {\rm otherwise,}
%  \end{cases}
% \end{equation*}
The regular normal cone and (limiting) normal cone of $\Omega$ at $y\in \Omega$ are defined by $\widehat N_{\Omega}(y):= \widehat\partial \delta_\Omega(y)$ and $N_{\Omega}(y):= \partial \delta_\Omega(y)$ respectively. We use $\dist(x,\Omega)$ to denote the distance from an $x\in \R^n$ to $\Omega$ and $\P_\Omega(x)$ to denote the projection, {\em i.e.}, $\dist(x,\Omega) := \inf_{y\in \Omega}\|x - y\|$ and $\P_\Omega(x) := \argmin_{y\in \Omega}\|x - y\|$.
For a nonempty closed set $\Omega\subseteq\R^{m\times n}$, the distance from an $X\in \R^{m\times n}$ to $\Omega$ and its projection are defined with respect to the Fr\"{o}benius norm:
\begin{equation*}
\dist(X, \Omega):= \inf_{Y\in\Omega}\|X - Y\|_F\ \ {\rm and}\ \  \P_{\Omega}(X):= \argmin_{Y\in\Omega} \|X - Y\|_F.
\end{equation*}

We next recall the definition of prox-regular sets; see \cite[Exercise 13.31]{RW98}.

\begin{definition} [Prox-regular sets] A closed set $\Omega$ is prox-regular at $\widebar{x}\in\Omega$ for $\widebar{v}\in N_{\Omega}(\widebar{x})$ if there exist $\epsilon > 0$ and $\sigma \ge 0$ such that whenever $x\in\Omega$ and $v\in N_{\Omega}(x)$ with $\|x - \widebar{x}\| < \epsilon$ and $\|v - \widebar{v}\| < \epsilon$, it holds that
\begin{equation*}
\langle v, y - x\rangle \le \frac{\sigma}2\|y - x\|^2 \ {\rm for\ all}\ y\in\Omega\ {\rm with}\ \|y - \bar x\|  < \epsilon.
\end{equation*}
Furthermore, $\Omega$ is prox-regular at $\widebar{x}$ if it is prox-regular at $\widebar{x}$ for all $\widebar{v}\in N_{\Omega}(\widebar{x})$.
\end{definition}

We now define the notion of pseudo-projection, which will be used in our subsequent discussions.
\begin{definition} [Pseudo-projection] \label{def_ps} Let $\Omega\subseteq\R^n$ be a nonempty closed set, $u\in\Omega$ and $x\in\R^n$. The pseudo-projection $\P^s_{\Omega}(x; u)$ of $x$ onto $\Omega$ with respect to $u$ is the collection of all $y\in \Omega$ satisfying:
\begin{enumerate}[{\rm (a)}]
  \item {\bf (Stationarity)} $x - y \in N_{\Omega}(y)$; and
  \item {\bf (Function value improvement)} $\|y - x\| \le \|u - x\|$.
\end{enumerate}
\end{definition}
Notice that any element of the pseudo-projection is a stationary point of the corresponding projection problem, {\em i.e.}, it is a stationary point of the function $w\mapsto \frac12\|w - x\|^2 + \delta_\Omega(w)$. Also, each such element improves the function value of the corresponding projection problem relative to a given point $u\in\Omega$. Pseudo-projection onto a nonempty closed set is always nonempty: indeed, in view of \cite[Example~6.16]{RW98} and \cite[Proposition~6.5]{RW98}, we have $\P_\Omega(x)\subseteq \P^s_{\Omega}(x; u)$ for all $x\in \R^n$ and all $u\in \Omega$.

%Thus, the pseudo-projection can be viewed as a relaxation of the true projection $\P_{\Omega}(x)$.

For notational simplicity, we define linear operators $\L_i:\R^{Nn}\to \R^{(r_i+1)\times(n-r_i)}$ ($i = 1, \ldots, N$) and $\L: \R^{Nn} \to \R^{(r+1)\times N(n-r)}$ as
\begin{equation}\label{simple_map}
\begin{split}
  \L_i(y)& := \H_{r_i + 1}(y_i),\ \ \   i = 1,\ldots,N,\\
  \L(y) &:= [\H_{r + 1}(y_1)\ \H_{r + 1}(y_2)\ \cdots \ \H_{r + 1}(y_N)],
\end{split}
\end{equation}
where $y = vec\left(y_1 \cdots\, y_N\right)\in \R^{Nn}$, and $r_i$ ($i=1,\ldots,N$) and $r$ are defined in \eqref{multi_model}. We now present some properties of the linear operators $\H_l(\cdot)$ and $\L^*$.

\begin{lemma}\label{conj_map}
For any $Y\in\R^{(r+1)\times(n-r)}$,
\begin{equation*}
\H_{r+1}^*(Y) = \bigg[ Y(1,1)\, \cdots\, \overbrace{\sum_{i+j=k+1}Y(i,j)}^{{\rm the}\ k{\rm th\ element}}\, \cdots\, Y(r+1,n-r)\bigg]^\top\in\R^n.
\end{equation*}
\end{lemma}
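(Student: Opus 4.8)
The plan is to read off the formula directly from the defining property of the adjoint, namely the identity $\langle \H_{r+1}(y),\, Y\rangle = \langle y,\, \H_{r+1}^*(Y)\rangle$ valid for every $y\in\R^n$ and every $Y\in\R^{(r+1)\times(n-r)}$, where the left-hand inner product is the matrix inner product and the right-hand one is the Euclidean inner product on $\R^n$. Since the adjoint is uniquely determined by this relation, it suffices to massage $\langle \H_{r+1}(y),\,Y\rangle$ into the form $\langle y,\,w\rangle$ for a vector $w\in\R^n$ and then identify $\H_{r+1}^*(Y)=w$.

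First I would expand the left-hand side entry by entry. By the definition of the Hankel operator, the $(i,j)$th entry of $\H_{r+1}(y)$ is $y(i+j-1)$ for $1\le i\le r+1$ and $1\le j\le n-r$, so that
\[
\langle \H_{r+1}(y),\, Y\rangle = \sum_{i=1}^{r+1}\sum_{j=1}^{n-r} y(i+j-1)\,Y(i,j).
\]
Next I would reorganize this double sum along anti-diagonals by setting $k:=i+j-1$. As $(i,j)$ ranges over $\{1,\dots,r+1\}\times\{1,\dots,n-r\}$, the index $k$ ranges over $\{1,\dots,n\}$, with $k=1$ attained only at $(i,j)=(1,1)$ and $k=n$ only at $(i,j)=(r+1,n-r)$; collecting the terms with a common value of $k$ gives
\[
\langle \H_{r+1}(y),\, Y\rangle = \sum_{k=1}^{n} y(k)\Bigl(\sum_{i+j=k+1} Y(i,j)\Bigr),
\]
where the inner sum runs over the admissible pairs, i.e. those $(i,j)$ with $1\le i\le r+1$, $1\le j\le n-r$ and $i+j=k+1$.

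The right-hand side above is exactly $\langle y,\,w\rangle$ with $w\in\R^n$ defined by $w(k)=\sum_{i+j=k+1}Y(i,j)$. Since this holds for all $y\in\R^n$, uniqueness of the adjoint yields $\H_{r+1}^*(Y)=w$, which is the asserted formula; the first and last coordinates reduce to $Y(1,1)$ and $Y(r+1,n-r)$ because only a single admissible pair contributes in those two cases. I do not expect any genuine difficulty here: the only step that requires care is the bookkeeping of the admissible index ranges when passing to the anti-diagonal parametrization, so that the inner sum $\sum_{i+j=k+1}Y(i,j)$ picks up precisely the valid entries of $Y$ and nothing more.
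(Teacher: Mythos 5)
Your proof is correct and follows essentially the same route as the paper's own argument: both expand $\langle Y,\H_{r+1}(y)\rangle$ via the defining adjoint identity and regroup the double sum along anti-diagonals $i+j=k+1$ to read off the $k$th coordinate of $\H_{r+1}^*(Y)$. Your version is a bit more careful about the admissible index ranges, but there is no substantive difference.
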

\begin{comment}
\begin{proof}
Fix any $Y\in\R^{(r+1)\times(n-r)}$, let $c = [c(1) \ldots c(n)]^\top = \H_{r+1}^*(Y)$. For any $y\in\R^n$,  by the definition of joint operator, we have
\begin{equation*}
\sum_{k=1}^nc(k)y(k)= \langle\H_{r+1}^*(Y), y\rangle = \langle Y, \H_{r+1}(y)\rangle = \sum_{k=1}^n\sum_{i+j = k+1}Y_{i,j}y(k).
\end{equation*}
The conclusion then follows from this and the arbitrariness of $Y$ and $y$.
\end{proof}
\end{comment}

\begin{lemma}\label{lemma}
For any $W_i\in\R^{(r + 1)\times(n - r)}$, $i = 1, \ldots, N$,  it holds
\begin{equation*}
\L^*[W_1\ W_2\ \cdots\ W_N] = vec\left(\H_{r + 1}^*(W_1)\ \H_{r + 1}^*(W_2)\ \cdots\ \H_{r + 1}^*(W_N)\right).
\end{equation*}
\end{lemma}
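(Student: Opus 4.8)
The plan is to establish the identity directly from the defining property of the adjoint, $\langle \L^*(W), y\rangle = \langle W, \L(y)\rangle$, by exploiting the block (horizontal concatenation) structure of $\L$ and the definition of $vec$. Concretely, I would fix arbitrary $W_i\in\R^{(r+1)\times(n-r)}$ for $i=1,\ldots,N$, set $W := [W_1\ W_2\ \cdots\ W_N]\in\R^{(r+1)\times N(n-r)}$, and fix an arbitrary $y = vec(y_1\cdots y_N)\in\R^{Nn}$. The whole proof is then a short chain of equalities followed by an appeal to the arbitrariness of $y$.

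First I would compute $\langle W, \L(y)\rangle$. Since $\L(y) = [\H_{r+1}(y_1)\ \cdots\ \H_{r+1}(y_N)]$ is a horizontal concatenation of $N$ blocks of equal size $(r+1)\times(n-r)$, and the Fr\"obenius inner product of two matrices is the sum of the products of corresponding entries, the inner product splits block-by-block:
\begin{equation*}
\langle W,\L(y)\rangle \;=\; \sum_{i=1}^N \langle W_i,\H_{r+1}(y_i)\rangle \;=\; \sum_{i=1}^N \langle \H_{r+1}^*(W_i), y_i\rangle,
\end{equation*}
where the second equality applies the adjoint property of $\H_{r+1}$ to each summand. On the other hand, writing out the Euclidean inner product on $\R^{Nn}$ in terms of the $N$ length-$n$ blocks and using the definition of $vec$, we get
\begin{equation*}
\Big\langle vec\big(\H_{r+1}^*(W_1)\ \cdots\ \H_{r+1}^*(W_N)\big),\, y\Big\rangle \;=\; \sum_{i=1}^N \langle \H_{r+1}^*(W_i), y_i\rangle .
\end{equation*}
Combining the two displays with $\langle\L^*(W),y\rangle = \langle W,\L(y)\rangle$ shows that $\L^*(W)$ and $vec\big(\H_{r+1}^*(W_1)\ \cdots\ \H_{r+1}^*(W_N)\big)$ have the same inner product against every $y\in\R^{Nn}$, hence they are equal; since the $W_i$ were arbitrary, the lemma follows.

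I do not expect any genuine obstacle here: the only point that deserves a moment of attention — and it is completely routine — is the block decomposition of the two inner products, namely that both $\L(y)$ and $vec(\cdot)$ are formed by concatenating equally sized pieces, so that the ambient inner product equals the sum of the inner products over those pieces. If one preferred to avoid even this, one could instead substitute the explicit entrywise formula for $\H_{r+1}^*$ from Lemma~\ref{conj_map} into both sides and match coordinates, but the adjoint argument above is cleaner and index-free.
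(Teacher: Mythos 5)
Your argument is correct and is essentially identical to the paper's proof: both fix the $W_i$ and an arbitrary $y$, split $\langle W,\L(y)\rangle$ into block-wise Fr\"obenius inner products, apply the adjoint property of $\H_{r+1}$ to each block, and conclude by the arbitrariness of $y$. No issues.
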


\begin{proof}
Fix any $W_i\in\R^{(r + 1)\times(n - r)}$, $i = 1, \ldots, N$. According to the definition of adjoint, for any $y = vec\left(y_1\ \cdots\ y_N\right)\in\R^{Nn}$, we have
\begin{eqnarray*}
\left\langle\L^*[W_1\ W_2\ \cdots\ W_N] , y\right\rangle &=& \left\langle [W_1\ W_2\ \cdots\ W_N], \L\left(vec\left(y_1\ \cdots\ y_N\right)\right)\right\rangle\\
&=& \left\langle [W_1\ W_2\ \cdots\ W_N],  [\H_{r+1}(y_1)\ \H_{r+1}(y_2)\ \cdots\ \H_{r+1}(y_N)]\right\rangle\\
&=& \sum_{i=1}^N\left\langle W_i, \H_{r+1}(y_i)\right\rangle = \sum_{i=1}^N\left\langle\H_{r+1}^*(W_i), y_i\right\rangle.
\end{eqnarray*}
Then the conclusion follows from this and the arbitrariness of $y$. This completes the proof.
\end{proof}

\section{A hybrid penalty method}\label{sec_hybrid}
Notice that there are multiple rank constraints in \eqref{multi_model}, making it difficult to compute the projection onto the feasible set. To handle these constraints, one intuitive idea is to use a penalty method to ``reduce" the number of constraints. Specifically, we replace some or all constraints by penalty functions which consist of penalty parameters and measures of constraint violation. However, approximate solutions returned by penalty methods are typically not feasible for \eqref{multi_model}. Although we can theoretically reduce constraint violation by increasing the weights in the penalty functions when feasibility is important ({\em e.g.}, in applications such as system identification \cite{Markovsky08}), this strategy leads to high computational cost and numerical instability in practice. One way out would be to shift to a local refinement method after obtaining a moderately accurate solution by the penalty method.

Based on these intuitive ideas, our solution method will then consist of two stages: a penalty method, followed by a post-processing scheme. We will describe the penalty method in Section~\ref{sdcam_sec}, the post-processing scheme in Section~\ref{ap_sec} and the hybrid penalty method and its convergence analysis in Section~\ref{conv_sec}.

\subsection{Stage 1: A penalty method} \label{sdcam_sec}

To describe the penalty method, we first rewrite \eqref{multi_model} as follows, using notation in \eqref{simple_map}:
\begin{eqnarray*}
\min_{y \in\R^{Nn}} & &\ \ f(y) \\
  {\rm s.t.} & &\ \  \rank \left(\L_i(y)\right) \le r_i, \ \ i = 1,\ldots, N,\\
  & &\ \  \rank\left(\L(y)\right) \le r.
 \end{eqnarray*}
This can be further equivalently rewritten as
\begin{equation}\label{uni_form}
\min_{y \in\R^{Nn}}  \ F(y):= f(y) + \delta_{\Omega}(y)+ \sum_{i = 1}^k\delta_{C_i}\left(\A_i(y)\right),
\end{equation}
with three ways of  setting $k$, $\A_i$, $\Omega$ and $C_i$:
\begin{itemize}
\item \textbf {Variant I:} $k = 1$,  $\A_1 = \L$ and
\begin{equation*}
\Omega = \left\{ y:\, \rank\left(\L_i(y)\right) \le r_i, \ \ i = 1, \ldots, N\right\},\ \ C_1:= \{Y: \rank(Y)\le r\}.
\end{equation*}

\item \textbf{Variant II:} $k = N$, $\A_i = \L_i$ ($i = 1, \ldots, N$) and
\begin{equation*}
\Omega = \{y:\, \rank\left(\L(y)\right)\le r\}, \ \  C_i = \{Y: \rank(Y)\le r_i\}, \ \ i = 1, \ldots, N.
 \end{equation*}

\item \textbf{Variant III:} $k = N + 1$,  $\A_i = \L_i$ ($i = 1, \ldots, N$), $\A_{N + 1} = \L$ and
\begin{equation*}
\Omega = \R^{Nn}, \, C_i = \{Y\hspace{-1mm}: \rank(Y)\le r_i\}, \, i = 1, \ldots, N, \,  C_{N + 1}= \{Y\hspace{-1mm}: \rank(Y)\le r\}.
\end{equation*}
\end{itemize}
Notice that for the above three variants, the projection onto $C_i$ has a closed-form solution. On the other hand, while the projection onto $\Omega$ does not in general admit a closed-form solution, some kinds of stationary points of this projection problem can be approximately and efficiently obtained by some existing solvers such as SLRA \cite{MK14}, as we will show in Section~\ref{pseudo_sec}, under mild assumptions.

Now we are ready to describe our penalty method. We first replace the constraints $\A_i(y)\in C_i$ ($i = 1, \ldots, k$) in \eqref{uni_form} by a penalty for violating the constraints to obtain the auxiliary function
\begin{equation}\label{penalty_form}
F_{\lambda}(y) = f(y) + \delta_{\Omega}(y) + \sum_{i=1}^k\frac1{2\lambda}\dist^2(\A_i(y), C_i),
\end{equation}
where $\lambda > 0$ is the penalty parameter. Then we approximately minimize the auxiliary function $F_{\lambda}(y)$ and update $y$ while decreasing $\lambda$.

Note that each term of the penalty function in \eqref{penalty_form} can be written as the Moreau envelope of indicator function $\delta_{C_i}(\cdot)$. Using the DC decomposition of the Moreau envelope as in \cite[Equation 6]{LPT18}, we see that
\begin{eqnarray}
F_{\lambda}(y) &=& f(y) + \delta_{\Omega}(y) + \sum_{i=1}^k\inf_{Y_i}\left\{\delta_{C_i}(Y_i) + \frac1{2\lambda}\|Y_i - \A_i(y)\|_F^2\right\}\nn \\
&=& f(y) + \delta_{\Omega}(y) + \sum_{i = 1}^k\left\{\frac1{2\lambda}\|\A_i(y)\|_F^2 - \sup_{Y_i\in C_i}\left\{\frac1{\lambda}\left\langle\A_i^*(Y_i),y\right\rangle - \frac1{2\lambda}\|Y_i\|_F^2\right\} \right\} \nn\\
&=& \underbrace{f(y)  + \sum_{i=1}^k\frac1{2\lambda}\|\A_i(y)\|_F^2}_{h(y)} +\ \delta_{\Omega}(y) - \underbrace{\sum_{i=1}^k\sup_{Y_i\in C_i}\left\{\frac1{\lambda}\left\langle\A_i^*(Y_i),y\right\rangle - \frac1{2\lambda}\|Y_i\|_F^2\right\}}_{g(y)}, \label{sub_prob}
\end{eqnarray}
where $h$ is a smooth function and $g$  is  a convex function with $\sum_{i=1}^k\frac1{\lambda}\A_i^*\left(\P_{C_i}(\A_i(y))\right)\subseteq\partial{g(y)}$; see \cite[Equation 7]{LPT18}. Recall that the projection onto $C_i$ is easy to compute. Thus, for Variant III, in which $\Omega = \R^{Nn}$, $F_{\lambda}$ can be minimized via NPG$_{\rm major}$ in \cite[Algorithm 2]{LPT18}. However, for Variants I and II, the projection onto $\Omega$ is not easy to compute. Fortunately, one can obtain some kind of stationary points for the corresponding projection problems via specific solvers: as we shall see in Section~\ref{pseudo_sec}, such a point belongs to the set of pseudo-projection (see Definition~\ref{def_ps}) under mild assumptions. Thus, we propose a variant of NPG$_{\rm major}$ as Algorithm~\ref{alg_npg}, which we call vNPG$_{\rm major}$, where we replace the projection in the subproblem by pseudo-projection.

%\begin{mdframed}
%\vspace{-2mm}

\begin{algorithm}[h]
\caption {vNPG$_{\rm major}$ for minimizing \eqref{sub_prob}}
\label{alg_npg}

\begin{algorithmic}
\STATE{{\color{navyblue}{\bf Step 0.}} Choose $y^0\in\Omega$, $L_{\max} > L_{\min} > 0$, $\tau > 1$, $c > 0$ and an integer $M \ge 0$. Set $l = 0$.}

\STATE{{\color{navyblue}{\bf Step 1.}} Pick any $\xi^l\in \sum_{i=1}^k\frac1{\lambda}\A_i^*\left(\P_{C_i}(\A_i(y^l))\right)$ and arbitrarily choose $L_l^0\in[ L_{\min}, L_{\max}]$. For $L_{l,i} = L_l^0\tau^i$, $i = 0,1,\ldots$ compute
\begin{equation}\label{slra}
u_i^l  \in \P^s_{\Omega}\left(y^l - \frac1{L_{l,i}}(\nabla h(y^l) - \xi^l); y^l\right)
\end{equation}
 until some $u_i^l$ satisfies
\begin{equation}\label{non_ls}
F_{\lambda}(u_i^l) \le \max_{[l-M]_+\le j\le l}F_{\lambda}(y^j) - \frac{c}2\|u_i^l - y^l\|^2.
\end{equation}}

\STATE{{\color{navyblue}{\bf Step 2.}} Let $\widebar{L}_l = L_{l,i}$, $y^{l+1} = u_i^l$ and $l = l + 1$. Go to Step 1 unless some stopping criterion is met.}
\vspace{2mm}

\end{algorithmic}
\end{algorithm}
%\end{mdframed}

The well-definedness of \eqref{non_ls}, \emph{i.e.}, whether the line-search loop terminates after a finite number of iterations, will be discussed in Section~\ref{conv_sec}.

\subsection{Stage 2: Post-processing scheme}\label{ap_sec}

After we obtain an approximate solution by the penalty method, we shift to a post-processing method. A natural and simple choice for post-processing is the alternating projection method. Let
\begin{equation}\label{omega_ap}
\begin{aligned}
\Omega_1: &= \left\{y\in\R^{Nn}:\, \rank\left(\L_i(y)\right) \le r_i, \ \ i = 1, \ldots, N\right\},\\
\Omega_2 :&=  \{y\in\R^{Nn}: \rank\left(\L(y)\right) \le r\}.
\end{aligned}
\end{equation}
In the classical alternating projection method, one has to find the global minimizers of the following problems in each iteration, for some $\widetilde y$.
\begin{eqnarray}\label{subp_ge}
\min_{y = vec\left(y_1 \cdots\, y_N\right)\in\R^{Nn}}  \ \frac12\|y - \widetilde{y}\|^2&&\ \ {\rm s.t.}\ \   \rank(\H_{r_i+1}(y_i)) \le r_i,   \ i = 1,\ldots, N. \label{sub_ge_1}\\
\min_{y = vec\left(y_1 \cdots\, y_N\right)\in\R^{Nn}} \ \ \frac12\|y - \widetilde{y}\|^2&&\ \ {\rm s.t.}\ \ \rank\left([\H_{r+1}(y_1)\cdots \H_{r+1}(y_N)]\right) \le r. \label{sub_ge_2}
\end{eqnarray}
However, these problems are in general difficult to solve globally. Fortunately, as mentioned in Section~\ref{sdcam_sec}, we can obtain some point in the set of pseudo-projection efficiently, under mild assumptions. Thus, we adopt the following {\em alternating pseudo-projection method} for post-processing: start at some $x^0\in \Omega_2$ and $z^0\in \Omega_1$, let
 \begin{equation}\label{alg_AP}
z^{t+1} \in \P^s_{\Omega_1}(x^t; z^t) \ \ \ {\rm and}\ \ \ x^{t+1} \in \P^s_{\Omega_2}(z^{t+1}; x^t)\ \ \ t = 0, 1, \ldots
 \end{equation}

\subsection{Hybrid penalty method for (\ref{multi_model}) and convergence analysis}\label{conv_sec}

The hybrid penalty method for solving \eqref{multi_model}, which consists of the penalty method discussed in Section~\ref{sdcam_sec} and the post-processing method discussed in Section~\ref{ap_sec}, is presented as Algorithm~\ref{alg_hyb}.

%\begin{mdframed}
%\vspace{-1mm}

\begin{algorithm}[h]
\caption{A hybrid penalty method for \eqref{multi_model}}
\label{alg_hyb}
\vspace{2mm}

\begin{algorithmic}

\STATE{\color{navyblue} \textbf{Penalty method for \eqref{uni_form}}}
\vspace{1mm}

\STATE{{\bf Step 0.} Pick two sequences of positive numbers with $\epsilon_t\downarrow 0$ and $\lambda_{t} \downarrow 0$,  choose a $\widebar{\lambda} \ge 0$, $y^{\rm feas} \in\Omega\cap \bigcap_{i=1}^k \A_i^{-1}(C_i)$ and $y^0\in\Omega$. Set $t = 0$.}
\vspace{1mm}

\STATE{{\bf Step 1.} If $F_{\lambda_t}(y^t) \le F_{\lambda_t}(y^{\rm feas})$, set $y^{t,0} = y^t$. Else, set $y^{t,0} = y^{\rm feas}$.}
\vspace{1mm}

\STATE{{\bf Step 2.} Approximately minimize $F_{\lambda_t}$ by Algorithm~\ref{alg_npg}, starting at $y^{t,0}$ and terminating at $y^{t,l_t}$ when the following three conditions hold:
\begin{equation}\label{stop_cri}
\begin{aligned}
\|y^{t,l_t+1} - y^{t,l_t}\|\le \epsilon_t,\ \ \ \ F_{\lambda_t}(y^{t,l_t})\le F_{\lambda_t}(y^{t,0}),\qquad\qquad\qquad\qquad\\
\dist\bigg(0,\nabla f(y^{t,l_t})\!+\!N_{\Omega}(y^{t,l_t+1})\!\!+\!\!\sum_{i=1}^k\frac1{\lambda_t}\A_i^*\!\left(\A_i(y^{t,l_t}) - \P_{C_i}(\A_i(y^{t,l_t}))\right)\!\!\bigg)\le \epsilon_t.
\end{aligned}
\end{equation}}
\vspace{-4mm}

\STATE{{\bf Step 3.} Update $ y^{t+1} =  y^{t,l_t}$ and $t=t+1$. If $\lambda_t < \widebar{\lambda}$ and $\widebar{\lambda} > 0$ , go to Step 4; otherwise go to Step 1.}

\vspace{2mm}

\STATE{\color{navyblue} \textbf{Post-processing method for \eqref{omega_ap}}}
\vspace{1mm}

\STATE{{\bf Step 4.} Let $x^0 \in \P^s_{\Omega_2}(y^{t+1}; 0)$ and $z^0\in\P^s_{\Omega_1}(y^{t+1}; 0)$, use alternative pseudo-projection as follows until some termination criterion is met:
\vspace{-1mm}
 \begin{equation}\label{post_iter}
z^{t+1} \in \P^s_{\Omega_1}(x^t; z^t) \ \ \ {\rm and}\ \ \ x^{t+1} \in \P^s_{\Omega_2}(z^{t+1}; x^t)\ \ \ t = 0, 1, \ldots
 \end{equation}}
\end{algorithmic}
\end{algorithm}
%\end{mdframed}

For the rest of the section, we will analyze the convergence of the hybrid penalty method, including the convergence analysis for the penalty method in Section~\ref{sub_pen} and the convergence rate for the post-processing method in Section~\ref{sub_ap}. Before proceeding, we first show that the criteria \eqref{non_ls} and \eqref{stop_cri} are well-defined.

\subsubsection{Well-definedness of (\ref{non_ls}) and (\ref{stop_cri})}%\label{well_def}
The following theorem is about the well-definedness of the line-search criterion \eqref{non_ls} and the termination criterion \eqref{stop_cri}, \emph{i.e.}, they can be satisfied after finitely many number of inner iterations. The proof is similar to that in \cite[Proposition~1]{LPT18}.

\begin{theorem}\label{thm_prox}
The line-search criterion \eqref{non_ls} is well-defined. Moreover, $\{\widebar{L}_l\}$ is bounded. Furthermore, the termination criterion \eqref{stop_cri} for Algorithm~\ref{alg_npg} is well-defined.
\end{theorem}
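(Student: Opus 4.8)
The plan is to prove the three assertions—well-definedness of the line-search \eqref{non_ls}, boundedness of $\{\widebar L_l\}$, and well-definedness of the termination criterion \eqref{stop_cri}—in that order, following the template of \cite[Proposition~1]{LPT18} but accounting for the fact that we use pseudo-projections rather than exact projections in \eqref{slra}. First I would fix an outer iteration index $l$ and recall that $h$ in \eqref{sub_prob} has a Lipschitz continuous gradient, say with modulus $L_h$ (this follows since $f$ has Lipschitz gradient and each $y\mapsto\frac1{2\lambda}\|\A_i(y)\|_F^2$ is a quadratic with Hessian $\frac1\lambda\A_i^*\A_i$). The key point is that, by the defining property (b) of pseudo-projection (function value improvement), any $u_i^l\in\P^s_\Omega(y^l-\frac1{L_{l,i}}(\nabla h(y^l)-\xi^l);y^l)$ satisfies
\begin{equation*}
\Big\|u_i^l-\big(y^l-\tfrac1{L_{l,i}}(\nabla h(y^l)-\xi^l)\big)\Big\|^2\le\Big\|y^l-\big(y^l-\tfrac1{L_{l,i}}(\nabla h(y^l)-\xi^l)\big)\Big\|^2,
\end{equation*}
which, after expanding, yields the descent-type inequality
\begin{equation*}
\langle\nabla h(y^l)-\xi^l,\,u_i^l-y^l\rangle\le-\frac{L_{l,i}}2\|u_i^l-y^l\|^2.
\end{equation*}
This is exactly the inequality an exact projection would give, so pseudo-projection is enough. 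Then I would combine this with the descent lemma for $h$, $h(u_i^l)\le h(y^l)+\langle\nabla h(y^l),u_i^l-y^l\rangle+\frac{L_h}2\|u_i^l-y^l\|^2$, and with the subgradient inequality for the convex function $g$, namely $g(u_i^l)\ge g(y^l)+\langle\xi^l,u_i^l-y^l\rangle$ since $\xi^l\in\partial g(y^l)$ (here $\xi^l\in\sum_i\frac1\lambda\A_i^*(\P_{C_i}(\A_i(y^l)))\subseteq\partial g(y^l)$ by \eqref{sub_prob}). Adding these and using $F_\lambda=h+\delta_\Omega-g$ together with $u_i^l\in\Omega$ gives $F_\lambda(u_i^l)\le F_\lambda(y^l)-\frac12(L_{l,i}-L_h)\|u_i^l-y^l\|^2$. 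Hence as soon as $L_{l,i}=L_l^0\tau^i\ge L_h+c$, which happens for some finite $i$ because $\tau>1$, the line-search criterion \eqref{non_ls} holds (using $F_\lambda(y^l)\le\max_{[l-M]_+\le j\le l}F_\lambda(y^j)$); this proves well-definedness and simultaneously shows $\widebar L_l\le\tau(L_h+c)$ whenever $L_l^0\ge$ some threshold, and in general $\widebar L_l\le\max\{L_{\max},\tau(L_h+c)\}$, giving the uniform bound on $\{\widebar L_l\}$.

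For the well-definedness of \eqref{stop_cri} I would argue as follows. The first condition $\|y^{t,l_t+1}-y^{t,l_t}\|\le\epsilon_t$ and the descent inequality $F_{\lambda_t}(y^{t,l+1})\le F_{\lambda_t}(y^{t,l})-\frac c2\|y^{t,l+1}-y^{t,l}\|^2$ (which follows from \eqref{non_ls} once one also checks, as in \cite{LPT18}, that the nonmonotone max-term is controlled and $F_{\lambda_t}$ is bounded below—here $F_{\lambda_t}\ge0$ since $f\ge0$ and the distance terms are nonnegative) together force $\|y^{t,l+1}-y^{t,l}\|\to0$ along the inner iterations, so the first and (by the monotone-on-a-subsequence/telescoping argument, or directly the second stopping condition $F_{\lambda_t}(y^{t,l_t})\le F_{\lambda_t}(y^{t,0})$, which holds for every $l$ by descent since $y^{t,l_t,0}$ is the starting point) the second condition are eventually met. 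For the third, stationarity condition, I would write the optimality condition satisfied by $u_i^l=y^{t,l+1}$ from the pseudo-projection property (a): $y^{t,l}-\frac1{\widebar L_l}(\nabla h(y^{t,l})-\xi^l)-y^{t,l+1}\in N_\Omega(y^{t,l+1})$, i.e.
\begin{equation*}
\widebar L_l(y^{t,l}-y^{t,l+1})-\nabla h(y^{t,l})+\xi^l\in N_\Omega(y^{t,l+1}).
\end{equation*}
Adding $\nabla h(y^{t,l})-\xi^l$ and recalling $\nabla h(y)=\nabla f(y)+\sum_i\frac1{\lambda_t}\A_i^*\A_i(y)$ and $\xi^l\in\sum_i\frac1{\lambda_t}\A_i^*(\P_{C_i}(\A_i(y^{t,l})))$, one gets
\begin{equation*}
\widebar L_l(y^{t,l}-y^{t,l+1})\in\nabla f(y^{t,l})+N_\Omega(y^{t,l+1})+\sum_{i=1}^k\frac1{\lambda_t}\A_i^*\big(\A_i(y^{t,l})-\P_{C_i}(\A_i(y^{t,l}))\big),
\end{equation*}
so the distance in \eqref{stop_cri} is bounded above by $\widebar L_l\|y^{t,l}-y^{t,l+1}\|$, which is $\le\max\{L_{\max},\tau(L_h+c)\}\cdot\|y^{t,l}-y^{t,l+1}\|\to0$ by the boundedness of $\{\widebar L_l\}$ established above. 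Hence all three conditions in \eqref{stop_cri} hold for all large enough $l$, proving well-definedness.

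The main obstacle, and the place where care is needed beyond the cited proof, is precisely the substitution of pseudo-projection for exact projection: I must verify that Definition~\ref{def_ps}(b) delivers the same quadratic descent inequality an exact projection would give, and that Definition~\ref{def_ps}(a) is the genuine stationarity inclusion $x-y\in N_\Omega(y)$ needed to extract \eqref{stop_cri}'s third condition—both of which are built into the definition, so the adaptation is clean. A secondary technical point is handling the nonmonotone line search (the $\max_{[l-M]_+\le j\le l}$ term): one needs the standard observation that $\max_{[l-M]_+\le j\le l}F_{\lambda_t}(y^j)$ is nonincreasing in $l$ and that, since $F_{\lambda_t}$ is bounded below by $0$, the successive differences $\|y^{t,l+1}-y^{t,l}\|$ are square-summable, hence vanish; this is routine and identical to \cite[Proposition~1]{LPT18}, so I would simply cite it rather than reproduce it.
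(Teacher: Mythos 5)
Your proposal is correct and follows essentially the same route as the paper's proof: it derives the key quadratic descent inequality from property (b) of the pseudo-projection, combines it with the descent lemma for $h$ and the subgradient inequality for $g$ to establish the line search and the bound on $\{\widebar{L}_l\}$, and then uses property (a) together with $\|y^{l+1}-y^l\|\to 0$ (via the standard nonmonotone-line-search argument) to verify all three conditions in \eqref{stop_cri}. The points you flag as requiring care—that the pseudo-projection delivers exactly the inequalities an exact projection would—are precisely the observations the paper makes, so no gap remains.
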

\begin{proof}
We start by discussing the line-search criterion. First, we observe from \eqref{slra} and Definition~\ref{def_ps} that
\begin{equation*}
\left\|u_i^l - (y^l - \frac1{L_{l,i}}(\nabla h(y^l) - \xi^l))\right\|^2 \le \left\|y^l - (y^l - \frac1{L_{l,i}}(\nabla h(y^l) - \xi^l))\right\|^2,
\end{equation*}
which is equivalent to
\vspace{-2mm}
\begin{equation}\label{haha1}
\langle\nabla h(y^l) - \xi^l, u_i^l - y^l\rangle \le - \frac{L_{l,i}}2\|u_i^l - y^l\|^2.
\end{equation}
Next, recall from the definition of $\xi^l$ and \cite[Equation 7]{LPT18} that
\begin{equation}\label{haha2}
\xi^l\in \sum_{i=1}^k\frac1{\lambda}\A_i^*\left(\P_{C_i}(\A_i(y^l))\right)\subseteq \partial g(y^l).
\end{equation}
Using \eqref{haha1} and \eqref{haha2} together with $u_i^l\in\Omega$, the $L$-smoothness of $h$ and the convexity of $g$ gives (here, we let $L$ denote the Lipschitz continuity modulus of $\nabla h$):
\begin{equation*}
\begin{split}
&F_{\lambda}(u_i^l) = h(u_i^l)  - g(u_i^l) \le h(y^l) + \langle\nabla h(y^l),  u_i^l - y^l\rangle + \frac{L}2\|u_i^l - y^l\|^2 - g(u_i^l)\\
&\le h(y^l) + \langle\nabla h(y^l),  u_i^l - y^l\rangle + \frac{L}2\|u_i^l - y^l\|^2 - g(y^l) -\langle\xi^l, u_i^l - y^l\rangle\\
&= F_{\lambda}(y^l) + \langle\nabla h(y^l) - \xi^l, u_i^l - y^l\rangle + \frac{L}2\|u_i^l - y^l\|^2\le F_{\lambda}(y^l) + \frac{L - L_{l,i}}2\|u_i^l - y^l\|^2.
\end{split}
\end{equation*}
Thus, we see that \eqref{non_ls} is satisfied whenever $L_{l,i} \ge L + c$. From the definition of $L_{l,i}$, this latter inequality must hold when $i$ satisfies $\tau^i L_{\min} \ge L+c$, implying that the line-search criterion \eqref{non_ls} is well-defined. Now, the boundedness of $\{\bar L_t\}$ can be argued as in \cite[Proposition~1]{LPT18}.

Next, let $\{y^l\}$ be generated by Algorithm~\ref{alg_npg} starting at a $y^{t,0}$ in Step 2 of Algorithm~\ref{alg_hyb}. We show that the termination criteria \eqref{stop_cri} hold after finitely many iterations in Algorithm~\ref{alg_npg} (with $y^l$ in place of $y^{t,l_t}$ and $y^{l+1}$ in place of $y^{t,l_t+1}$ in \eqref{stop_cri}). First, from \eqref{non_ls}, it is easy to see that the second inequality in \eqref{stop_cri} holds. Moreover, using a similar line of arguments as in \cite[Lemma 4]{WNF09}, we can show that
\begin{equation}\label{needed}
\lim_{l\rightarrow\infty}\|y^{l+1} - y^l\| = 0.
\end{equation}
Thus, the first inequality in \eqref{stop_cri} also holds after a finite number of iterations in Algorithm~\ref{alg_npg}. Finally, we note from \eqref{slra} and Definition~\ref{def_ps} that
\begin{equation*}
y^l - \frac1{\widebar{L}_l}(\nabla h(y^l) - \xi^l) - y^{l+1} \in N_{\Omega}(y^{l+1}).
\end{equation*}
Using this together with the definition of $h$ in \eqref{sub_prob}, we further obtain
\begin{equation*}
\widebar{L}_l(y^l - y^{l+1}) - \nabla f(y^l) - \sum_{i=1}^k\frac1{\lambda}\A_i^*\A_i(y^l) + \xi^l\in N_{\Omega}(y^{l+1}).
\end{equation*}
Combining this relation with \eqref{haha2} gives
\begin{equation*}
\dist\bigg(0, \nabla f(y^l) + N_{\Omega}(y^{l+1}) + \sum_{i=1}^k\frac1{\lambda}\A_i^*\left(\A_i(y^l)- \P_{C_i}(\A_i(y^l))\right)\bigg) \le \widebar{L}_{l}\|y^{l+1}- y^{l}\|.
\end{equation*}
This inequality together with \eqref{needed} and the boundedness of $\{\bar L_l\}$ shows that the third inequality in \eqref{stop_cri} holds after a finite number of iterations.
This completes the proof.
\end{proof}

\subsubsection{Convergence analysis for the penalty method in Algorithm~\ref{alg_hyb}}\label{sub_pen}

Notice that when $\widebar{\lambda} = 0$, the penalty method in Algorithm~\ref{alg_hyb} is exactly the same as \cite[Algorithm~1]{LPT18}. Thus, we know from \cite[Theorem~2]{LPT18} that the sequence $\{y^t\}$ is bounded and that, under some constraint qualifications, any accumulation point of sequence $\{y^t\}$ is a stationary point of \eqref{uni_form}.

We next estimate the violation of the constraints for the solution given by the penalty method in Algorithm~\ref{alg_hyb} in the following theorem. It implies that the constraint violation can be suppressed by terminating the algorithm at a small $\lambda_t$.

\begin{theorem}\label{errbd_thm}
Let $\{y^t\}$ be the sequence generated  by the penalty method  in Algorithm~\ref{alg_hyb} for solving \eqref{uni_form}. Then we have for $t\ge 1$ and $i=1,...,k$ that
\begin{equation*}
   {\rm dist}\left(\A_i(y^{t}), C_i\right) \le \sqrt{2\lambda_{t-1}f(y^{{\rm feas}})}.
\end{equation*}
\end{theorem}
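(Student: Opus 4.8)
The plan is to exploit the feasibility of $y^{\rm feas}$ together with the monotonicity built into Step~1 and the termination criterion in Step~2 of Algorithm~\ref{alg_hyb}. First I would note that, since $y^{\rm feas}\in\Omega\cap\bigcap_{i=1}^k\A_i^{-1}(C_i)$, we have $\delta_\Omega(y^{\rm feas})=0$ and $\dist(\A_i(y^{\rm feas}),C_i)=0$ for every $i=1,\dots,k$, so from the definition \eqref{penalty_form} of $F_\lambda$ it follows that $F_\lambda(y^{\rm feas})=f(y^{\rm feas})$ for every $\lambda>0$.

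Next, fix $t\ge 0$. By the rule in Step~1, whichever branch is taken, the starting point $y^{t,0}$ satisfies $F_{\lambda_t}(y^{t,0})\le F_{\lambda_t}(y^{\rm feas})$: this is immediate in the ``if'' branch, and in the ``else'' branch $y^{t,0}=y^{\rm feas}$. Combining this with the second inequality in the termination criterion \eqref{stop_cri} (which is attained after finitely many inner iterations by Theorem~\ref{thm_prox}) and with the update $y^{t+1}=y^{t,l_t}$ in Step~3, we obtain
\begin{equation*}
F_{\lambda_t}(y^{t+1})\le F_{\lambda_t}(y^{t,0})\le F_{\lambda_t}(y^{\rm feas})=f(y^{\rm feas}).
\end{equation*}
In particular $F_{\lambda_t}(y^{t+1})<\infty$, so $y^{t+1}\in\Omega$ and $\delta_\Omega(y^{t+1})=0$.

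Finally, I would invoke nonnegativity of $f$ and of $\delta_\Omega$ to isolate a single penalty term. For any $i\in\{1,\dots,k\}$,
\begin{equation*}
\frac1{2\lambda_t}\dist^2\!\left(\A_i(y^{t+1}),C_i\right)
\le \sum_{j=1}^k\frac1{2\lambda_t}\dist^2\!\left(\A_j(y^{t+1}),C_j\right)
\le f(y^{t+1})+\sum_{j=1}^k\frac1{2\lambda_t}\dist^2\!\left(\A_j(y^{t+1}),C_j\right)
= F_{\lambda_t}(y^{t+1})\le f(y^{\rm feas}),
\end{equation*}
and rearranging gives $\dist(\A_i(y^{t+1}),C_i)\le\sqrt{2\lambda_t f(y^{\rm feas})}$. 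Replacing $t+1$ by $t$ yields the stated bound for all $t\ge 1$ and $i=1,\dots,k$.

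The argument is essentially bookkeeping, so there is no real obstacle; the only points requiring care are that the bound on $F_{\lambda_t}(y^{t+1})$ rests on the termination criterion \eqref{stop_cri} actually being reachable (guaranteed by Theorem~\ref{thm_prox}), and that the nonnegativity hypotheses on $f$ (and the indicator $\delta_\Omega$) are what let us drop those terms to bound a single $\dist^2$ summand.
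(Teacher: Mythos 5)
Your argument is correct and follows essentially the same chain of inequalities as the paper's proof: $\tfrac{1}{2\lambda_{t-1}}\dist^2(\A_i(y^t),C_i)\le F_{\lambda_{t-1}}(y^t)\le F_{\lambda_{t-1}}(y^{t-1,0})\le F_{\lambda_{t-1}}(y^{\rm feas})=f(y^{\rm feas})$, using the nonnegativity of $f$, Step~1, and the second inequality in \eqref{stop_cri}. Your write-up simply spells out the bookkeeping (feasibility of $y^{\rm feas}$, the two branches of Step~1, dropping nonnegative terms) in more detail than the paper does.
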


\begin{proof}
Note from the nonnegativity of $f$, the definition of $y^t$, the second inequality in \eqref{stop_cri} and the choice of $y^{t,0}$ and $y^{\rm feas}$ that for $i = 1, \ldots, k$,
\begin{equation*}
\begin{split}
 & \frac1{2\lambda_{t-1}}{\rm dist}^2(\A_i(y^{t}), C_i) \le F_{\lambda_{t-1}}(y^{t}) = F_{\lambda_{t-1}}(y^{t-1,l_{t-1}})\\
 & \le F_{\lambda_{t-1}}(y^{t-1,0}) \le F_{\lambda_{t-1}}(y^{\rm feas}) = f(y^{\rm feas}).
 \end{split}
\end{equation*}
%Consequently, we have
%\begin{equation*}%\label{dist_vio}
%{\rm dist}\left(\A_i(y^{t}), C_i\right) \le \sqrt{2\lambda_{t-1}f(y^{{\rm feas}})}.
%\end{equation*}
This completes the proof.
\end{proof}

\subsubsection{Convergence analysis of the post-processing method in Algorithm~\ref{alg_hyb}}\label{sub_ap}

 First, we present the following theorem which will be used later for the convergence analysis of the post-processing method in Algorithm~\ref{alg_hyb}.
\begin{theorem}\label{prox_proof}
Let $\Omega_2$ be defined as in \eqref{omega_ap}. Then $\Omega_2$ is prox-regular at any $\widebar{y}\in\Omega_2$ that satisfies $\rank(\L(\widebar{y})) = r$.
\end{theorem}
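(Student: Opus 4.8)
The plan is to realize $\Omega_2$ as the inverse image $\L^{-1}(C)$ of the determinantal variety
\[
C := \{Z \in \R^{(r+1)\times N(n-r)} : \rank(Z) \le r\}
\]
under the linear map $\L$ of \eqref{simple_map}, to observe that $C$ is a smooth manifold---hence prox-regular---near $Z_0 := \L(\widebar{y})$ precisely because $\rank(Z_0) = r$ is the largest rank admitted by $C$, and then to pull prox-regularity back along $\L$ by checking a transversality condition, which is where the Hankel structure enters.

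First I would record the local geometry of $C$ at $Z_0$. Since $r \le \lfloor\frac{n-1}{2}\rfloor$ forces $n - r > r$, hence $N(n-r) > r = \rank(Z_0)$, a nonzero $r\times r$ minor of $Z_0$ persists in a neighborhood; after permuting rows and columns (a linear isometry) one may assume the leading $r\times r$ block $Z_{11}$ is invertible near $Z_0$, and then $\rank(Z) \le r \iff g(Z) := Z_{22} - Z_{21}Z_{11}^{-1}Z_{12} = 0$, where $g$ is $C^\infty$ and $Dg(Z_0)$ is onto (the block $Z_{22}$ enters $g$ linearly). Thus $C$ is locally the zero set of a submersion, hence a $C^\infty$-submanifold, and if $Z_0 = U\Sigma V^\top$ is a compact SVD then
\[
N_C(Z_0) = \{W : U^\top W = 0,\ WV = 0\} = \{u_\perp w^\top : w\in\R^{N(n-r)},\ V^\top w = 0\},
\]
where $u_\perp$ spans the one-dimensional left null space of $Z_0$; in particular $C$ is prox-regular at $Z_0$.

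The crux is the constraint qualification $N_C(Z_0)\cap\ker(\L^*) = \{0\}$, equivalently the transversality $\Range(\L) + T_C(Z_0) = \R^{(r+1)\times N(n-r)}$. To verify it, take $W = u_\perp w^\top \in N_C(Z_0)$ with $\L^*(W) = 0$, and partition $w = vec(w_1\cdots w_N)$ with each $w_i\in\R^{n-r}$, so that the $i$-th block of $W$ equals $u_\perp w_i^\top$. By Lemma~\ref{lemma}, $\L^*(W) = 0$ gives $\H_{r+1}^*(u_\perp w_i^\top) = 0$ for every $i$; by Lemma~\ref{conj_map}, $\H_{r+1}^*(u_\perp w_i^\top)$ is exactly the coefficient vector of the polynomial product $p(x)\,q_i(x)$, where $p(x) := \sum_{j=1}^{r+1} u_\perp(j)\,x^{j-1}$ and $q_i(x) := \sum_{j=1}^{n-r} w_i(j)\,x^{j-1}$. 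As $u_\perp \ne 0$ we have $p \ne 0$, and since $\R[x]$ is an integral domain, $p\,q_i = 0$ forces $q_i = 0$, i.e.\ $w_i = 0$; hence $w = 0$ and $W = 0$. This establishes the constraint qualification.

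It then remains to pull prox-regularity back along $\L$. The cleanest route: $\Omega_2$ coincides with $(g\circ\L)^{-1}(0)$ on a neighborhood of $\widebar{y}$, and the constraint qualification makes $D(g\circ\L)(\widebar{y}) = Dg(Z_0)\circ\L$ surjective; hence $\Omega_2$ is a $C^\infty$-submanifold near $\widebar{y}$, and a $C^2$-submanifold is prox-regular at each of its points (see \cite{RW98}). Alternatively, one may invoke the calculus rule that $\L^{-1}(C)$ inherits prox-regularity at $\widebar{y}$ from $C$ at $\L(\widebar{y})$ precisely under this qualification. The main obstacle is the constraint-qualification step: it is the only place the Hankel structure is used, and it is what makes the theorem hold with no hypotheses beyond $\rank(\L(\widebar{y})) = r$; the surrounding manifold/transfer arguments are routine.
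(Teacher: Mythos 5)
Your proof is correct and follows the same overall skeleton as the paper's: both realize $\Omega_2$ as $\L^{-1}(C)$ for the rank variety $C\subseteq\R^{(r+1)\times N(n-r)}$, reduce the claim to (i) prox-regularity of $C$ at $\L(\widebar{y})$ and (ii) the constraint qualification $N_C(\L(\widebar{y}))\cap\ker(\L^*)=\{0\}$, and both exploit that $\rank(\L(\widebar{y}))=r$ is the maximal rank in $C$ (together with $N(n-r)\ge r+1$). Where you genuinely diverge is in the execution of the crucial step (ii). The paper invokes \cite[Proposition~3.6]{Luke13} to describe $N_C(\L(\widebar{y}))$ as certain rank-$\le 1$ matrices and then runs an entrywise induction over the anti-diagonal index sets $S_k$, using vanishing $2\times 2$ minors and the vanishing anti-diagonal sums $\H_{r+1}^*(\widehat{W}_\ell)=0$ to force $\widehat{W}_\ell=0$. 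You instead write the normal vector as $u_\perp w^\top$ and recognize $\H_{r+1}^*(u_\perp w_i^\top)$ as the coefficient vector of the polynomial product $p\cdot q_i$, so the conclusion is immediate because $\R[x]$ has no zero divisors; this is the same combinatorial fact, but your route is shorter, more conceptual, and ties the result to the approximate-GCD interpretation of the Hankel structure mentioned in the paper's introduction, whereas the paper's induction is self-contained at the level of matrix entries and does not require citing the manifold structure or the SVD-based form of the normal space. Your concluding step also differs slightly: rather than (or in addition to) the calculus rule \cite[Corollary~2.3]{PR10} used in the paper, you note that under the constraint qualification $D(g\circ\L)(\widebar{y})$ is a submersion, so $\Omega_2$ is itself locally a $C^\infty$ submanifold, which gives prox-regularity directly and is in fact a marginally stronger conclusion. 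Both routes are sound.
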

\begin{proof}
First, we can rewrite $\Omega_2$ as
\begin{equation*}
\Omega_2 = \{y\in\R^{Nn}: \L(y)\in C\}\ \ {\rm with}\ \ C:= \{Y\in\R^{(r+1)\times{N(n-r)}}: \rank(Y)\le r\}.
\end{equation*}
By \cite[Corollary 2.3]{PR10}, we see that $\Omega_2$ is prox-regular at $\widebar{y}\in\Omega_2$ if the following conditions hold:
\begin{itemize}
\item[(a)] there is no $z\neq 0$ in $N_C(\L(\widebar{y}))$ with $\L^*z = 0$;

\item[(b)] for every $\widebar{v}\in N_{\Omega_2}(\widebar{y})$, the set $C$ is prox-regular at $\L(\widebar{y})$ for every $z\in N_{C}(\L(\widebar{y}))$ with $\L^*z = \widebar{v}$.
\end{itemize}
We will prove that the above two statements hold. First, we prove (a). Using $\rank(\L(\widebar{y})) = r$ and noting that by assumption, we have $r\le \frac{n-1}2$ and hence $N(n-r)\ge r+1$, we see from \cite[Proposition 3.6]{Luke13} that
\begin{equation}\label{nc_form}
N_{C}(\L(\widebar{y})) = \left\{W: \left[\ker(W)\right]^\perp\cap\left[\ker(\L(\widebar{y}))\right]^\perp = \{0\}\,\, {\rm and}\,\,\rank(W)\le 1\right\}.
\end{equation}
On the other hand, we see from Lemma~\ref{lemma} that for any $W = [W_1\ W_2\ \cdots\ W_N]$ with $W_{\ell}\in\R^{(r+1)\times(n-r)}$ ($\ell = 1, \ldots, N$), we have
\begin{equation}\label{w_map}
\L^*[W_1\ W_2\ \cdots\ W_N] = vec\left(\H_{r + 1}^*(W_1)\ \H_{r + 1}^*(W_2)\ \cdots\ \H_{r + 1}^*(W_N)\right).
\end{equation}
Suppose that there exists some $\widehat{W} = [\widehat{W}_1\ \cdots\ \widehat{W}_N]\in N_C(\L(\widebar{y}))\cap \ker(\L^*)$ with $\widehat{W}_{\ell}\in\R^{(r+1)\times(n-r)}$ ($\ell = 1, \ldots, N$).  We then know from \eqref{nc_form} and \eqref{w_map} that
\begin{equation}\label{two_eq}
\rank(\widehat{W})\le 1 \ \ {\rm and}\ \ \H_{r+1}^*(\widehat{W}_{\ell}) = 0 \ \ {\rm for\ all}\ \ell = 1,\ldots,N.
\end{equation}
Now we fix  any $\ell$. Note from \eqref{two_eq} and Lemma~\ref{conj_map} that
\begin{equation}\label{use}
\rank(\widehat{W}_{\ell}) \le 1,\ \ \ \ \sum_{i+j=k+1}\widehat{W}_{\ell}(i,j) = 0,\ \ {\rm for\ any}\ \ k = 1,\ldots, n.
\end{equation}
We claim that $\widehat{W}_{\ell} = 0$. To prove this, we establish the following equivalent statement: for each $k = 1, \ldots, n$, all elements in the following set equal 0:
\begin{equation*}
S_k := \left\{\widehat{W}_{\ell}(i,j): i+ j  = k + 1\right\}.
\end{equation*}
First, it is easy to see from the equality in \eqref{use} that all elements in $S_1$ and $S_n$ are zero. Now we prove that every element in $S_k$ is zero by induction for each $k=1,2,\ldots,n-1$.

Suppose that there exists some $K\ge 1$ so that every element in $\bigcup_{\ell=1}^KS_\ell$ is zero. Let $\widehat{W}_{\ell}(\widebar{i}, \widebar{j})$ and $\widehat{W}_{{\ell}}(\widehat{i}, \widehat{j})$ be any two elements in $S_{K+1}$ with $\widebar{i} < \widehat{i}$. We then know from the first inequality in \eqref{use} that the $2\times 2$ submatrix formed by $\widehat{W}_{\ell}(\widebar{i}, \widehat{j})$, $\widehat{W}_{\ell}(\widebar{i}, \widebar{j})$, $\widehat{W}_{\ell}(\widehat{i}, \widehat{j})$ and $\widehat{W}_{\ell}(\widehat{i}, \widebar{j})$ is singular. Since $\widebar{i} + \widehat{j} < \widehat{i} + \widehat{j} = K + 2$, we conclude that $\widehat{W}_{\ell}(\widebar{i},\widehat{j}) = 0$ by the induction hypothesis. Consequently, there is at least one 0 in $\{\widehat{W}_{\ell}(\widebar{i}, \widebar{j}),\widehat{W}_{{\ell}}(\widehat{i}, \widehat{j})\}$. By the arbitrariness of these two elements in $S_{K+1}$, we see that there is at most one nonzero element in $S_{K+1}$. This together with the equality in \eqref{use} implies that every element in $S_{K+1}$ equals 0. Thus, we have $\widehat{W}_{\ell} =0$ by induction. Since $\ell$ is arbitrary, we see further that $\widehat{W} = 0$. This proves that $N_C(\L(\widebar{y}))\cap \ker(\L^*) = \{0\}$, which is equivalent to statement (a).

Now we prove (b). Using $\rank(\L(\widebar{y})) = r$, we know from \cite[Proposition 3.8]{Luke13} that $C$ is prox-regular at $\L(\widebar{y})$. Then by the definition of prox-regularity, we see that (b) holds. This completes the proof.
\end{proof}

Since \eqref{post_iter} involves the pseudo-projection instead of the actual projection, the post-processing method in Algorithm~\ref{alg_hyb} is different from the classical alternating projection method. Nevertheless, we can still show that the post-processing method in Algorithm~\ref{alg_hyb} has local linear convergence under commonly used assumptions for establishing local linear convergence of the alternating projection method (see, for example, the assumptions used in \cite[Theorem~5.16]{LLM07} and \cite[Theorem~4.2]{Luke13}). The proof follows the same line of arguments as in \cite[Theorem~5.2]{LLM07}. We include the proof in the Appendix for the convenience of the readers.

\begin{theorem}\label{thm_ap}
Let $\Omega_1$ and $\Omega_2$ be defined as in \eqref{omega_ap} and suppose that there exists some $\widebar{y}\in\Omega_1\cap\Omega_2$ such that $\rank(\L(\widebar{y})) = r$ and $N_{\Omega_1}(\widebar{y})\cap -N_{\Omega_2}(\widebar{y})  = \{0\}$.
Then for any initial points $x^0\in\Omega_2$ and $z^0\in\Omega_1$ near $\widebar{y}$, any sequence generated by the following iterations converges to a point in $\Omega_1\cap\Omega_2$ $R$-linearly:
\begin{equation}\label{aap_iter}
z^{t+1} \in \P^s_{\Omega_1}(x^t; z^t) \ \ \ {\rm and}\ \ \ x^{t+1} \in \P^s_{\Omega_2}(z^{t+1}; x^t)\ \ \ t = 0, 1, \ldots
\end{equation}
\end{theorem}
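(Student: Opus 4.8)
The plan is to follow the classical local linear convergence proof for alternating projections between two sets, one of which is prox-regular, as carried out in \cite[Theorem~5.2]{LLM07}, but to check at each step that only the two defining properties of pseudo-projections from Definition~\ref{def_ps} are used rather than genuine projections. The crucial structural inputs are already in place: Theorem~\ref{prox_proof} gives that $\Omega_2$ is prox-regular at $\widebar{y}$ (using $\rank(\L(\widebar{y}))=r$), and the transversality-type condition $N_{\Omega_1}(\widebar{y})\cap -N_{\Omega_2}(\widebar{y})=\{0\}$ is assumed. Note that $\Omega_1$ need not be prox-regular; as in \cite{LLM07}, prox-regularity of just one of the two sets suffices once one controls the normal-cone angle condition on a neighborhood.

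First I would record the local consequences of the transversality condition: by upper semicontinuity of the limiting normal cone and a standard compactness argument, there exist $\rho>0$ and $c_*\in(0,1)$ such that for all $x\in\Omega_1\cap B(\widebar{y},\rho)$, all $z\in\Omega_2\cap B(\widebar{y},\rho)$, all $u\in N_{\Omega_1}(x)$ and all $v\in N_{\Omega_2}(z)$, one has $\langle u,-v\rangle \le c_*\|u\|\|v\|$. Shrinking $\rho$ if necessary, I would also invoke the prox-regularity of $\Omega_2$ at $\widebar{y}$ to get a single modulus $\sigma\ge 0$ valid uniformly on $\Omega_2\cap B(\widebar{y},\rho)$ for normals of bounded length, i.e. $\langle v, y-z\rangle \le \frac{\sigma}{2}\|y-z\|^2$ for $y,z\in\Omega_2\cap B(\widebar{y},\rho)$ and $v\in N_{\Omega_2}(z)$ small. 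The point of the pseudo-projection definition is that $z^{t+1}\in\P^s_{\Omega_1}(x^t;z^t)$ gives both the stationarity inclusion $x^t-z^{t+1}\in N_{\Omega_1}(z^{t+1})$ (property (a)) and the norm bound $\|z^{t+1}-x^t\|\le\|z^t-x^t\|$ (property (b)), and symmetrically for $x^{t+1}\in\P^s_{\Omega_2}(z^{t+1};x^t)$; these are precisely the two facts that the \cite{LLM07} argument extracts from a true projection step, so the proof transfers essentially verbatim.

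Next I would run the contraction argument on one combined step. Starting from $x^0,z^0$ sufficiently close to $\widebar{y}$, property (b) of the two pseudo-projections gives a monotone chain $\|z^{t+1}-x^t\|\le\|z^t-x^t\|$ and $\|x^{t+1}-z^{t+1}\|\le\|x^t-z^{t+1}\|$, so the distances $d_t:=\|x^t-z^t\|$ (and the analogous cross-distances) are nonincreasing; combined with an inductive check that all iterates stay in $B(\widebar{y},\rho)$ — which follows because each step moves the iterate by no more than the current gap, and the gaps are summable once a genuine contraction is established — this keeps us in the good neighborhood. Writing the two unit normals $u=(x^t-z^{t+1})/\|x^t-z^{t+1}\|\in N_{\Omega_1}(z^{t+1})$ and $v=(z^{t+1}-x^{t+1})/\|z^{t+1}-x^{t+1}\|\in N_{\Omega_2}(x^{t+1})$, I would expand $\|x^t-x^{t+1}\|^2=\|x^t-z^{t+1}\|^2+2\langle x^t-z^{t+1},z^{t+1}-x^{t+1}\rangle+\|z^{t+1}-x^{t+1}\|^2$, bound the cross term from below using the angle inequality (for the $\Omega_1$-side normal against the $\Omega_2$-side direction) and from above using prox-regularity of $\Omega_2$ to absorb a $\frac{\sigma}{2}\|z^{t+1}-x^{t+1}\|^2$ error; rearranging yields $\|z^{t+1}-x^{t+1}\|^2\le \gamma^2\,\|x^t-z^{t+1}\|^2\le\gamma^2\|z^t-x^t\|^2$ for some $\gamma=\gamma(c_*,\sigma,\rho)<1$ after a final shrink of $\rho$. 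This gives linear decay of the gaps $\|x^t-z^{t+1}\|$, $\|z^{t+1}-x^{t+1}\|$, hence (via the triangle inequality and summing the geometric series of step sizes) that $\{x^t\}$ and $\{z^t\}$ are Cauchy, converge to a common limit $y^\infty$ with $x^t-z^{t+1}\to 0$ and $z^{t+1}-x^{t+1}\to 0$, and by closedness of $\Omega_1$ and $\Omega_2$ we get $y^\infty\in\Omega_1\cap\Omega_2$; the geometric bound on step sizes is exactly $R$-linear convergence.

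The main obstacle I anticipate is not the contraction algebra itself but making the neighborhood bookkeeping airtight: because pseudo-projections need not be single-valued or continuous, one must argue by induction that the iterates remain in $B(\widebar{y},\rho)$ so that the uniform angle constant $c_*$ and the uniform prox-regularity modulus $\sigma$ apply at every step, and one has to verify that the required smallness of the normal vectors $\|u-\widebar v\|,\|v-\widebar v\|<\epsilon$ in the prox-regularity definition is met — here one uses that $\widebar v$ can be taken to be the relevant limiting normal at $\widebar y$ and that the iterates' normals converge to it. I would mirror the corresponding neighborhood argument of \cite[Theorem~5.2]{LLM07} and \cite[Theorem~4.2]{Luke13}, choosing the initial radius a fixed fraction of $\rho$ so that the summable step sizes never escape the ball, and I would flag that replacing projections by pseudo-projections changes nothing in this part since property (b) still controls each step length by the previous gap. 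Since this is routine once the constants are fixed, and since the referenced theorems already treat the genuine-projection case in full, I would place the detailed version in the Appendix as promised, presenting here only the statement and the reduction just sketched.
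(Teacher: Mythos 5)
Your proposal is correct and follows essentially the same route as the paper's Appendix proof: both transplant the argument of \cite[Theorem~5.2]{LLM07}, using only properties (a) and (b) of Definition~\ref{def_ps} in place of true projections, combining the uniform normal-cone angle bound from the transversality assumption with the prox-regularity of $\Omega_2$ (Theorem~\ref{prox_proof}) to get a one-step contraction $\|x^{t+1}-z^{t+1}\|\le c_0\|x^t-z^{t+1}\|$, and then running the same inductive neighborhood bookkeeping and Cauchy-sequence estimate to conclude $R$-linear convergence. The only cosmetic difference is that you organize the contraction step by expanding $\|x^t-x^{t+1}\|^2$, whereas the paper adds the two inner-product inequalities directly; the ingredients and constants are the same.
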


\section{Subproblem: pseudo-projection}\label{pseudo_sec}\label{sec_pseudo}

In this section, we consider the pseudo-projection subproblems \eqref{slra} in Algorithm~\ref{alg_npg} and \eqref{post_iter} in Algorithm~\ref{alg_hyb}. Recall that their corresponding {\em projection} problems can be put in the following general form:
\begin{equation}\label{line_form}
\min_{y\in\R^d}\ \ \frac12\|y - \widehat{y}\|^2 \ \ {\rm s.t.}\ \ \rank(\A(y)) \le m;
\end{equation}
here, $\A(y)\in\R^{p\times q}$, and $d$, $m$, $p$, $q$ and $\A$ are given as in \eqref{case_1} or \eqref{case_2} below, corresponding to \eqref{sub_ge_1} and \eqref{sub_ge_2} respectively:
\begin{eqnarray}
& &\ \ d = n, \, m = r_i, \, p = r_i + 1, \,  q = n - r_i,\,  \A(y) = \H_{r_i+1}(y). \label{case_1}\\
& &\ \ d = Nn, \, m = r,  \, p = r+ 1,  \, q = N(n - r),\, \A(y) = [\H_{r+1}(y_1)\cdots \H_{r+1}(y_N)]. \label{case_2}
\end{eqnarray}
The pseudo-projection problem corresponding to \eqref{line_form} can now be stated as follows: given $\widehat{y}\in\R^d$ and some reference point $y_b\in\R^d$ satisfying $\rank(\A(y_b)) \le m$, compute
\begin{equation*}
y_s\in\P^s_{\{y:\; \rank(\A(y)) \le m\}}(\widehat{y}; y_b).
\end{equation*}

In what follows, we will describe how such a $y_s$ can be obtained by the solver SLRA in \cite{MK14}. Recall that SLRA was developed based on the following key observation:
\begin{equation*}
\rank(\A(y)) \le m \Longleftrightarrow \ \exists\ {\rm full\ row\hbox{-}rank\ matrix}\ R\in\R^{(p-m)\times p}\ {\rm such\ that}\ R\A(y) = 0.
\end{equation*}
In view of this, algorithms were developed in \cite{MK14} to approximately solve the following equivalent formulation of \eqref{line_form}:
\begin{equation}\label{R_prob}
\min_{R\in\R^{(p - m)\times p}}\ \  \Psi(R) \ \ {\rm s.t.}\ \ RR^\top = I,
\end{equation}
where
\begin{equation}\label{R_form}
\Psi(R) :=\inf_{y\in\R^d}\left\{\frac12\|y - \widehat{y}\|^2:\; R\A(y) = 0\right\}.
\end{equation}

Notice that under the settings in \eqref{case_1} or \eqref{case_2}, we have $p-m = 1$ and hence \eqref{R_prob} is an optimization problem in $\R^{1\times p}$ and the feasible set reduces to $\{R\in \R^{1\times p}:\; RR^T = 1\}$.
We will show below in Section~\ref{sec41} that $\Psi$ in \eqref{R_form} is smooth on $\R^{1\times p}\backslash\{0\}$. Thus, when gradient-based optimization methods such as those described in \cite{MK14} are applied to solving \eqref{R_prob}, one obtains a stationary point of the following function:
\begin{equation}\label{obj}
  \widetilde \Psi(R):=\Psi(R) + \delta_{\Theta}(R), \mbox{ where $\Theta := \{R\in\R^{1\times p}:\; RR^T = 1\}$}.
\end{equation}
We will then discuss in Section~\ref{sec42} how an element of $\P^s_{\{y:\; \rank(\A(y)) \le m\}}(\widehat{y}; y_b)$ can be obtained from such a stationary point under mild assumptions.

\subsection{Smoothness of $\Psi$}\label{sec41}

In this subsection, we will prove that $\Psi$ is smooth on $\R^{1\times p}\backslash\{0\}$. We start with an auxiliary lemma.

\begin{lemma}\label{lemma_LICQ}
Consider \eqref{line_form} with setting \eqref{case_1} or \eqref{case_2}. For any $U\in\R^{1\times q}$ and any $R\in\R^{1\times p}\backslash\{0\}$, if $\A^*({R}^\top U) = 0$, then $U = 0$.
%\begin{equation}\label{LICQ}
%\A^*({R}^\top U) = 0 \ \ \Longrightarrow \ \ U = 0.
%\end{equation}
\end{lemma}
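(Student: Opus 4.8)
The plan is to unwind the assumption $\A^*(R^\top U) = 0$ explicitly, using the structure of the Hankel adjoint computed in Lemma~\ref{conj_map} and the block decomposition of $\L^*$ in Lemma~\ref{lemma}. In both settings \eqref{case_1} and \eqref{case_2} the map $\A$ is built from Hankel operators $\H_{m+1}(\cdot)$, and $R^\top U \in \R^{p\times q}$ is a rank-one matrix (a single outer product). So the content of the lemma is: if a \emph{rank-one} matrix lies in $\ker \H_{m+1}^*$ (respectively $\ker\L^*$), then it must be zero, provided the rank-one factor $R$ on the left is nonzero. This is essentially the same combinatorial/anti-diagonal argument already carried out inside the proof of Theorem~\ref{prox_proof} when establishing statement (a) there, so I would reuse that machinery.

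First I would treat setting \eqref{case_1}. Here $\A = \H_{m+1}$ with $p = m+1$, $q = n-m$, and $R^\top U$ is the $(m+1)\times(n-m)$ matrix with $(i,j)$ entry $R(i)U(j)$. By Lemma~\ref{conj_map}, the hypothesis $\H_{m+1}^*(R^\top U) = 0$ says that for every anti-diagonal index $k = 1,\ldots,n$ we have $\sum_{i+j=k+1} R(i)U(j) = 0$. Since $R \ne 0$, pick an index $i_0$ with $R(i_0) \ne 0$. I would then run an induction over the anti-diagonals exactly as in the proof of Theorem~\ref{prox_proof}: the extreme anti-diagonals $k=1$ and $k=n$ are singletons, forcing $R(1)U(1) = 0$ and $R(m+1)U(n-m) = 0$; and a rank-one $2\times 2$ singular submatrix argument combined with the anti-diagonal sum being zero propagates vanishing of entries inward. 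Concretely, since the matrix $R^\top U$ has rank at most one, any two entries $R(i)U(j)$ and $R(i')U(j')$ sharing the property that the ``cross'' entries already vanish forces one of them to vanish; together with the zero-sum condition on each anti-diagonal this yields $R(i)U(j) = 0$ for all $i,j$. As $R(i_0)\ne 0$, this gives $U(j) = 0$ for all $j$, i.e. $U = 0$.

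For setting \eqref{case_2}, write $R^\top U = [\,R^\top U_1\ \ R^\top U_2\ \cdots\ R^\top U_N\,]$ according to the block partition of $\R^{(r+1)\times N(n-r)}$ into $N$ blocks of size $(r+1)\times(n-r)$, where $U = [U_1\ \cdots\ U_N]$. By Lemma~\ref{lemma}, $\L^*(R^\top U) = 0$ is equivalent to $\H_{r+1}^*(R^\top U_\ell) = 0$ for every $\ell = 1,\ldots,N$. Each $R^\top U_\ell$ is again rank-one with the \emph{same} nonzero left factor $R$, so the argument of the previous paragraph applies blockwise and gives $U_\ell = 0$ for each $\ell$, hence $U = 0$. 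I expect the main (and only real) obstacle to be bookkeeping: carefully stating the induction over anti-diagonals and the $2\times 2$ singular-submatrix step so that the rank-one structure of $R^\top U_\ell$ is used correctly; since this is verbatim the argument already spelled out in the proof of Theorem~\ref{prox_proof}, I would simply cite or mirror it rather than reproduce it in full.
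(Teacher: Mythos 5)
Your argument is correct, but it is not the route the paper takes. The paper's proof assembles the $n$ anti-diagonal sums $\sum_{s+t=k+1}R(s)U(t)$ into a single linear system $\widehat{R}\,U^\top=0$, where $\widehat{R}\in\R^{n\times(n-r_i)}$ is the banded (convolution-type) matrix built from the entries of $R$, and then shows $\widehat{R}$ has full column rank by extracting a lower-triangular $(n-r_i)\times(n-r_i)$ submatrix whose diagonal entries all equal the first nonzero entry of $R$; the case \eqref{case_2} is reduced blockwise to the same claim. You instead exploit that $R^\top U$ has rank at most one and that its anti-diagonal sums vanish, and invoke the $2\times2$-minor induction over anti-diagonals already carried out in the proof of Theorem~\ref{prox_proof} (which appears earlier in the paper, so the citation is legitimate) to conclude $R^\top U=0$, whence $U=0$ since $R\neq0$. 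Both arguments are sound. The paper's version buys a direct injectivity statement for the map $U\mapsto\A^*(R^\top U)$ with fixed $R\neq0$, using only the factorization and elementary linear algebra; yours buys economy by reusing existing machinery, at the cost of routing through the weaker intermediate conclusion $R^\top U=0$ and of deploying the rank-one minor argument where the outer-product structure makes it overkill --- indeed, in your formulation the hypothesis says exactly that the product of the polynomials $\sum_i R(i)z^{i-1}$ and $\sum_j U(j)z^{j-1}$ is zero, so $U=0$ follows at once from $\R[z]$ having no zero divisors. Your blockwise treatment of \eqref{case_2} via Lemma~\ref{lemma} matches the paper's.
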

\begin{proof}
Assume that $U\in\R^{1\times q}$ and $R\in\R^{1\times p}\backslash\{0\}$ satisfy $\A^*({R}^\top U) = 0$. We need to show that $U = 0$.

We first consider \eqref{line_form} with setting \eqref{case_1}. In this case, we have $m = r_i$, $p = r_i + 1$, $q = n - r_i$ and $\A(y) = \H_{r_i + 1}(y)$. Notice that $R^\top\in\R^{p\times1} = \R^{r_i+1}$ and $U^\top\in\R^{q\times1} = \R^{n-r_i}$. Write
\begin{equation*}
R = \left[\, R(1),\ldots, R(r_i+1)\, \right],\ \ \ \  U =  \left[\, U(1),\ldots, U(n-r_i)\,\right],
\end{equation*}
and $W = R^\top U$. Using Lemma~\ref{conj_map}, we obtain
\begin{small}
\begin{equation*}
\begin{split}
&\A^*(R^\top U) = \H_{r_i + 1}^*(W) = \bigg[\cdots \overbrace{\sum_{s+t = k+1} W(s,t)}^{{\rm the}\ k{\rm th\ element}} \cdots \bigg]^\top= \bigg[\cdots \overbrace{\sum_{s+t = k+1} R(s)U(t)}^{{\rm the}\ k{\rm th\ element}} \cdots \bigg]^\top\\
&= \hspace{-1mm}{\underbrace{\begin{bmatrix}
\hspace{-0.7mm}
R(1) & R(2) & \cdots & R(r_i+1) & & & & & \\
 & R(1) & \cdots & R(r_i) & R(r_i+1) & & & & \\
 & & \ddots  & \ddots & & \ddots & & & \\
 & & & & & & R(1) & \cdots & R(r_i+1)
\hspace{-0.7mm}
\end{bmatrix}^\top}_{\widehat{R}}}
\begin{bmatrix}
\hspace{-0.8mm}U(1)\\ U(2)  \\  \vdots\\ U(n-r_i)\hspace{-0.8mm}
\end{bmatrix}.
\end{split}
\end{equation*}
\end{small}

Since $\A^*({R}^\top U) = 0$, to show that $U = 0$, it suffices to show that the $\widehat{R}\in \R^{n\times(n-r_i)}$ above has full column rank.
To this end, we first note from $R\in \R^{1\times (r_i+1)}\backslash\{0\}$ that there is at least one nonzero element in $R$.
Let $\widebar{i}$ be the first integer in $1,\ldots, r_i+1$ with $R(\widebar{i})\neq 0$. Then the $(n-r_i)\times(n-r_i)$ submatrix of $\widehat{R}$ starting from the $\widebar{i}$th row is lower triangular with all diagonal entries being $R(\widebar{i})\neq 0$. Consequently, this submatrix is nonsingular and thus $\widehat{R}$ has full column rank.  This completes the proof for this case.

Now we consider \eqref{line_form} with setting \eqref{case_2}.  In this case, we have $m = r$, $p = r+ 1$, $q = N(n - r)$ and $\A(y) = \L(y) = [\H_{r+1}(y_1)\cdots \H_{r+1}(y_N)]$ with $y = vec(y_1\cdots y_N)$. Notice that $R^\top\in\R^{p\times1} = \R^{r+1}$ and $U^\top\in\R^{q\times1} = \R^{N(n-r)}$.
Write
\begin{equation*}
R = \left[R(1),\ldots, R(r+1)\right],\ \ \ \  U =  \left[U_1,\ldots, U_N\right],
\end{equation*}
where $U_i^\top\in\R^{n-r}$ ($i=1,\ldots, N$).  We then see from Lemma~\ref{lemma} that
\begin{equation*}
\A^*(R^\top U) = \L^*(R^\top U) = vec\bigg(\H_{r+1}^*(R^\top U_1)\cdots \overbrace{\H_{r+1}^*(R^\top U_k)}^{{\rm the}\ k{\rm th\ block}} \cdots \H_{r+1}^*(R^\top U_N)\bigg).
\end{equation*}
Similar to the proof in setting \eqref{case_1}, we can write the $k$th block of $\A^*(R^\top U)$ as
\begin{small}
\begin{equation*}
{\underbrace{\begin{bmatrix}
R(1) & R(2) & \cdots & R(r+1) & & & & & \\
 & R(1) & \cdots & R(r) & R(r+1) & & & & \\
 & & \ddots  & \ddots & & \ddots & & & \\
 & & & & & & R(1) & \cdots & R(r+1)
\end{bmatrix}^\top}_{\widebar{R}}}
\begin{bmatrix}
U_k(1)\\ U_k(2)  \\  \vdots\\ U_k(n-r)
\end{bmatrix}.
\end{equation*}
\end{small}Consequently, we have
\begin{equation}\label{last_blk}
\A^*({R}^\top U) = \begin{bmatrix}\widebar{R} & & \\ &\ddots &\\ & & \widebar{R}\end{bmatrix}U^\top.
\end{equation}
Since $\A^*({R}^\top U) = 0$, to prove that $U = 0$, we only need to show that the block diagonal matrix on the right-hand side of \eqref{last_blk} has full column rank. But then it suffices to show that $\widebar R$ has full column rank, and this latter claim can be established by following a similar line of arguments as in the proof for setting \eqref{case_1}. This completes the proof.
\end{proof}

\begin{theorem}
Consider \eqref{line_form} with setting \eqref{case_1} or \eqref{case_2}. Then the function $\Psi$ defined in \eqref{R_form} is smooth on $\R^{1\times p}\backslash\{0\}$.
\end{theorem}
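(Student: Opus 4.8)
The plan is to produce an explicit closed-form formula for $\Psi$ on $\R^{1\times p}\backslash\{0\}$ and then deduce smoothness from the smoothness of matrix inversion on the set of invertible matrices. The point is that, since $\A$ is linear, for each fixed $R$ the feasible set $\{y\in\R^d:\, R\A(y)=0\}$ of the inner problem in \eqref{R_form} is a linear subspace of $\R^d$, and it is nonempty (it contains $0$); hence $\Psi(R)$ is one half of the squared distance from $\widehat y$ to this subspace, and the infimum in \eqref{R_form} is attained at a unique point.

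First I would introduce the linear map $M_R:\R^d\to\R^{1\times q}$, $M_R(y):=R\A(y)$, whose adjoint is $M_R^*(U)=\A^*(R^\top U)$ for $U\in\R^{1\times q}$; indeed $\langle R\A(y),U\rangle=\langle\A(y),R^\top U\rangle=\langle y,\A^*(R^\top U)\rangle$. Lemma~\ref{lemma_LICQ} states precisely that $M_R^*$ is injective whenever $R\neq 0$; equivalently, $M_R$ is surjective onto $\R^{1\times q}$. Consequently the symmetric linear operator $G(R):=M_RM_R^*$ on $\R^{1\times q}$, which acts by $G(R)U=R\A(\A^*(R^\top U))$, satisfies $\langle U,G(R)U\rangle=\|\A^*(R^\top U)\|^2>0$ for every $U\neq 0$, so $G(R)$ is positive definite, and hence invertible, for all $R\in\R^{1\times p}\backslash\{0\}$.

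Next I would solve the inner minimization via the Lagrange/normal equations: writing the optimality conditions for $\min\{\frac12\|y-\widehat y\|^2:\, M_R y=0\}$, the unique minimizer is $y^\star(R)=\widehat y-\A^*\!\big(R^\top G(R)^{-1}R\A(\widehat y)\big)$ (equivalently, $\widehat y-y^\star(R)=M_R^*G(R)^{-1}M_R\widehat y$ is the orthogonal projection of $\widehat y$ onto $\Range(M_R^*)=(\ker M_R)^\perp$). Substituting back yields the closed form
\begin{equation*}
\Psi(R)=\frac12\big\langle R\A(\widehat y),\, G(R)^{-1}R\A(\widehat y)\big\rangle,\qquad R\in\R^{1\times p}\backslash\{0\}.
\end{equation*}

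Finally, smoothness follows at once: $R\mapsto R\A(\widehat y)$ is linear, $R\mapsto G(R)$ is a matrix-valued polynomial (quadratic in the entries of $R$), and inversion is a $C^\infty$ map on the open set of invertible $q\times q$ matrices; since $G(R)$ lies in that set for every $R\neq 0$, the composition $R\mapsto G(R)^{-1}$ is $C^\infty$ on $\R^{1\times p}\backslash\{0\}$, and hence so is $\Psi$. The only step requiring a little care — though it is entirely routine convex/linear algebra — is justifying the passage from the variational definition \eqref{R_form} to the closed form, i.e.\ verifying that $y^\star(R)$ really is the minimizer (feasibility plus orthogonality of the residual); I expect this, rather than the concluding differentiation argument, to be the main (minor) obstacle, and it is taken care of by the surjectivity of $M_R$ that Lemma~\ref{lemma_LICQ} provides.
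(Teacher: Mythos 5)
Your proposal is correct and follows essentially the same route as the paper: both arguments reduce smoothness of $\Psi$ to the surjectivity of $y\mapsto R\A(y)$ for $R\neq 0$ (equivalently, injectivity of $U\mapsto\A^*(R^\top U)$), which is exactly what Lemma~\ref{lemma_LICQ} supplies. The only difference is that the paper delegates the closed-form expression for $\Psi$ (and the resulting smoothness once the Gram operator is invertible) to \cite[Equation~5]{UM12}, whereas you derive that formula explicitly; your version is self-contained but not a genuinely different argument.
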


\begin{proof}
In view of \cite[Equation~5]{UM12} and recall that $p-m = 1$ (in both cases \eqref{case_1} and \eqref{case_2}), we only need to show that for any $R\in\R^{1\times p}\backslash\{0\}$, the linear map $G_R:\R^d\longrightarrow\R^q$ defined as $G_R(y):=(R\A(y))^\top$ is surjective, or equivalently, $G_R^*$ is injective. To proceed, fix any $R\in\R^{1\times p}\backslash\{0\}$ and consider any $z\in\R^q$ with $G_R^*(z) = 0$. Then we have for any $y\in\R^d$ that
\begin{equation*}
0 = \langle G_R^*(z), y\rangle = \langle z, G_R(y)\rangle =  \langle z, (R\A(y))^\top\rangle =  \langle \A^*(R^\top z^\top), y\rangle.
\end{equation*}
Thus we have $\A^*(R^\top z^\top) = 0$, which together with Lemma~\ref{lemma_LICQ} implies that $z = 0$. This completes the proof.
\end{proof}

Since $\Psi$ is smooth on $\R^{1\times p}\backslash\{0\}$, we can then apply standard gradient-based optimization methods to solving \eqref{R_prob} and obtain a stationary point of $\widetilde \Psi$ in \eqref{obj}. We next discuss how one can obtain a pseudo-projection from such a stationary point.

\subsection{Stationarity and improvement of function value}\label{sec42}
We discuss in this subsection how to obtain a pseudo-projection from a suitable stationary point $R^*$ of $\widetilde \Psi$ in \eqref{obj},
under mild assumptions. We start by showing how one can construct from $R^*$ a point satisfying the stationarity condition in Definition~\ref{def_ps}.

\begin{theorem}\label{thm1}
Consider \eqref{line_form} with setting \eqref{case_1} or \eqref{case_2}. Let $R^*$ be a stationary point of $\widetilde \Psi$ in \eqref{obj} and let $y^*$ achieve the infimum in \eqref{R_form} when $R = R^*$. Then
\begin{equation}\label{app_st}
0\in y^* - \widehat{y} + \A^*\left(N_{\{X:\; \rank(X)\le m\}}(\A(y^*))\right).
\end{equation}
If in addition $\rank(\A(y^*)) = m$, then we have
\begin{equation}\label{true_st}
0 \in  y^* - \widehat{y} + N_{\{y:\; \rank(\A(y)) \le m\}}(y^*).
\end{equation}
\end{theorem}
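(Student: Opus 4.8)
The plan is to exploit the chain-rule structure relating the $R$-formulation \eqref{R_prob}--\eqref{R_form} to the original projection problem \eqref{line_form}, using the surjectivity established in the smoothness proof. First I would write down the KKT/stationarity conditions for the inner problem \eqref{R_form}: since $y^*$ achieves the infimum in $\Psi(R^*)$ over the linear constraint $R^*\A(y)=0$, and since the constraint map $y\mapsto R^*\A(y)$ is surjective (Lemma~\ref{lemma_LICQ} and the preceding theorem), the standard Lagrange multiplier rule gives a multiplier $z^*\in\R^q$ with $y^* - \widehat{y} + \A^*((R^*)^\top (z^*)^\top)=0$, together with primal feasibility $R^*\A(y^*)=0$. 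Next I would use the outer stationarity: $R^*$ is a stationary point of $\widetilde\Psi = \Psi + \delta_\Theta$, so $0\in\nabla\Psi(R^*) + N_\Theta(R^*)$, where $N_\Theta(R^*) = \{\mu R^* : \mu\in\R\}$ since $\Theta$ is the unit sphere in $\R^{1\times p}$. I would compute $\nabla\Psi(R^*)$ by differentiating the value function; by Danskin-type reasoning (the inner problem is a strictly convex quadratic with a unique minimizer $y^*$ once $R^*$ is fixed, and the constraint depends smoothly on $R$), $\nabla\Psi(R^*)$ is expressible through $z^*$ and $\A(y^*)$, roughly $\nabla\Psi(R^*) = (z^*)^\top \A(y^*)^\top$ or a similar bilinear expression. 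Combining with $N_\Theta(R^*)$ yields $(z^*)^\top\A(y^*)^\top = -\mu R^*$ for some scalar $\mu$, i.e.\ the rank-one matrix $(R^*)^\top(z^*)^\top$ has its row space controlled appropriately.

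The crucial intermediate object is $V := (R^*)^\top (z^*)^\top \in \R^{p\times q}$, which satisfies $y^* - \widehat y + \A^*(V) = 0$ from the inner multiplier rule. I would then argue that $V \in N_{\{X:\,\rank(X)\le m\}}(\A(y^*))$. Since $p - m = 1$ in both settings \eqref{case_1} and \eqref{case_2}, $V$ is a rank-one (or zero) matrix; the normal cone to the determinantal variety $\{X : \rank(X)\le m\}$ at a point $X_0$ with $\rank(X_0) = m$ consists exactly of matrices $W$ with $\rank(W)\le p-m = 1$ and $\Range(W)\perp\Range(X_0)$, $\Range(W^\top)\perp\Range(X_0^\top)$ (this is the content of \cite[Proposition~3.6]{Luke13}, used already in the proof of Theorem~\ref{prox_proof}; when $\rank(X_0) < m$ one gets a more permissive cone, but it still contains rank-$\le 1$ matrices with appropriate orthogonality). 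The primal feasibility $R^*\A(y^*) = 0$ says $\Range((R^*)^\top)\perp\Range(\A(y^*))$ (in the row sense), which forces $\Range(V) = \Range((R^*)^\top)$ to be orthogonal to the column space of $\A(y^*)$; the outer stationarity $(z^*)^\top\A(y^*)^\top \in \{\mu R^*\}$, i.e.\ $z^* \perp \Range(\A(y^*)^\top)$ after transposing correctly, supplies the orthogonality on the other side. This gives $V \in N_{\{X:\,\rank(X)\le m\}}(\A(y^*))$, and then $\A^*(V) \in \A^*(N_{\{X:\,\rank(X)\le m\}}(\A(y^*)))$, so $0 \in y^* - \widehat y + \A^*(N_{\{X:\,\rank(X)\le m\}}(\A(y^*)))$, which is \eqref{app_st}. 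For the second assertion \eqref{true_st}, I would invoke the chain rule for normal cones: when $\rank(\A(y^*)) = m$, condition (a) of \cite[Corollary~2.3]{PR10} / the basic constraint qualification holds by Lemma~\ref{lemma_LICQ} (no nonzero $W\in N_{\{X:\,\rank(X)\le m\}}(\A(y^*))$ has $\A^*(W) = 0$), so $N_{\{y:\,\rank(\A(y))\le m\}}(y^*) = \A^*(N_{\{X:\,\rank(X)\le m\}}(\A(y^*)))$, and \eqref{app_st} immediately upgrades to \eqref{true_st}.

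The main obstacle I anticipate is making the differentiation of the value function $\Psi$ rigorous and pinning down the exact form of $\nabla\Psi(R^*)$ — in particular, correctly identifying the multiplier $z^*$ from the inner problem with the gradient of the outer problem, and tracking transposes between the $1\times p$ row vector conventions for $R$ and the $p\times q$ matrix $\A(y)$. This is essentially a Danskin/implicit-function computation: the inner minimizer $y^*(R)$ is locally a smooth function of $R$ near $R^*$ (because the constraint Jacobian $G_R$ is surjective and stays surjective for $R$ near $R^*$, as $\R^{1\times p}\setminus\{0\}$ is open), and differentiating $\Psi(R) = \tfrac12\|y^*(R)-\widehat y\|^2$ along with the constraint identity $R\A(y^*(R)) = 0$ produces the needed expression after the first-order terms in $y^*{}'(R)$ cancel against the inner optimality condition. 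Once that bilinear identity is in hand, the rank and orthogonality bookkeeping for the normal cone is routine given the results already cited from \cite{Luke13} and \cite{PR10}. I would also note the harmless degenerate case $z^* = 0$ (equivalently $y^* = \widehat y$), in which $\A^*(V) = 0$ and \eqref{app_st} holds trivially.
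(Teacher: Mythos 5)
Your route is essentially the paper's in a different dress: both arguments reduce stationarity of $\widetilde\Psi$ at $R^*$ to the KKT system $y^*-\widehat y+\A^*({R^*}^\top V^*)=0$, $V^*\A(y^*)^\top+\lambda^* R^*=0$, $R^*{R^*}^\top=1$, $R^*\A(y^*)=0$ (your $z^*$ is the paper's ${V^*}^\top$ and your $\mu$ its $\lambda^*$), and then show that the rank-one matrix ${R^*}^\top V^*$ lies in $N_{\{X:\,\rank(X)\le m\}}(\A(y^*))$. The difference is how the KKT system is obtained: you differentiate the value function via a Danskin/implicit-function computation and add $N_\Theta(R^*)=\{\mu R^*\}$, whereas the paper forms the joint function $\Phi(y,R)=\frac12\|y-\widehat y\|^2+\delta_{\{(y,R):\,R\A(y)=0\}}+\delta_{\{R:\,RR^\top=1\}}$, transfers $0\in\partial\widetilde\Psi(R^*)$ to $(0,0)\in\partial\Phi(y^*,R^*)$ by the marginal-function rule \cite[Theorem~10.13]{RW98}, and applies the multiplier rule under the LICQ supplied by Lemma~\ref{lemma_LICQ}. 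The paper's route avoids justifying smoothness of $R\mapsto y^*(R)$ and the envelope formula; yours makes $\nabla\Psi$ explicit (which is what \cite{UM12} computes anyway). Either works.

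Two steps, however, are justified by the wrong ingredient. First, $z^*\perp\Range(\A(y^*)^\top)$ does not follow from the outer stationarity $(z^*)^\top\A(y^*)^\top=-\mu R^*$ alone ``after transposing'': you must first show $\mu=0$, which requires right-multiplying that identity by ${R^*}^\top$ and using both $R^*\A(y^*)=0$ and $R^*{R^*}^\top=1$ (this is exactly the paper's step $\lambda^*=0$); without it the orthogonality on the row-space side is unproved. Second, and more substantively, the upgrade from \eqref{app_st} to \eqref{true_st} needs the inclusion $\A^*\bigl(N_{\{X:\,\rank(X)\le m\}}(\A(y^*))\bigr)\subseteq N_{\{y:\,\rank(\A(y))\le m\}}(y^*)$, and this is the direction the basic constraint qualification does \emph{not} deliver: the CQ yields $N_{\A^{-1}(C)}(y^*)\subseteq\A^*(N_C(\A(y^*)))$, the reverse of what you need. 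The inclusion you actually need follows from the regularity of $C=\{X:\rank(X)\le m\}$ at $\A(y^*)$ when $\rank(\A(y^*))=m$ (so $N_C=\widehat N_C$ there), combined with the always-valid regular-normal chain rule $\A^*\widehat N_C(\A(y^*))\subseteq\widehat N_{\A^{-1}(C)}(y^*)\subseteq N_{\A^{-1}(C)}(y^*)$; this is precisely where the hypothesis $\rank(\A(y^*))=m$ enters, and it is how the paper argues. Relatedly, for \eqref{app_st} at points with $\rank(\A(y^*))<m$ your appeal to ``a more permissive cone'' should be replaced by the characterization from \cite[Proposition~3.6]{Luke13} used in the paper, which only requires $\rank({R^*}^\top V^*)\le 1$ and $[\ker({R^*}^\top V^*)]^\perp\cap[\ker(\A(y^*))]^\perp=\{0\}$; your orthogonality conditions are stronger than this, so membership still follows once they are established.
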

\begin{proof}
First, we define
\begin{equation}\label{Phi_fun}
\Phi(y, R): = \frac12\|y - \widehat{y}\|^2 + \delta_{\{(y, R):\; R\A(y) = 0\}}(y, R) + \delta_{\{R:\; RR^\top= 1\}}(R).
\end{equation}
Then we see from \eqref{obj} and the definition of $y^*$ that
\begin{equation}\label{re_w}
\widetilde\Psi(R^*) = \inf_{y}\Phi(y, R^*) = \Phi(y^*,R^*).
\end{equation}
On the other hand, we also have from the stationarity of $R^*$ that $0\in \partial \widetilde \Psi(R^*)= \partial\left(\Psi + \delta_{\{R:\; RR^\top= 1\}}\right)(R^*)$.
Using this, \eqref{re_w} and \cite[Theorem 10.13]{RW98}, we see further that
\begin{equation}\label{diff_inclusion}
(0, 0)\in\partial \Phi(y^*, R^*).
\end{equation}

Next, notice from Lemma~\ref{lemma_LICQ} that for any $U\in \R^{1\times q}$, $y\in \R^d$, $\lambda\in \R$ and $R\in \R^{1\times p}\backslash\{0\}$, the following implication holds:
\begin{equation*}
\mbox{$\A^*({R}^\top U) = 0$ and $U{\A(y)}^\top + \lambda R = 0$}\ \ \Longrightarrow\ \ \mbox{ $U = 0$ and $\lambda = 0$.}
\end{equation*}
This corresponds to the linear independence constraint qualification for the following optimization problem:
\[
\min_{y\in \R^d,R\in \R^{1\times p}}\ \  \frac12\|y - \widehat{y}\|^2 \ \ {\rm s.t.}\ \ R\A(y)=0\ \ {\rm and}\ \ RR^\top = 1.
\]
Using this, the definition of $\Phi$ in \eqref{Phi_fun}, \eqref{diff_inclusion} and \cite[Example 10.8]{RW98}, we deduce that there exist $V^*\in\R^{1\times q}$ and a scalar $\lambda^*$ such that the following Karash-Kuhn-Tucker conditions hold:
\begin{eqnarray}\label{kkt}
 y^* - \widehat{y} + {\A}^*({R^*}^\top V^*) = 0, & &\ \ \  V^*\left(\A(y^*)\right)^\top + \lambda^* R^*  = 0, \label{kkt_1}\\
 R^*{R^*}^\top - 1 = 0, & &\ \ \  R^*\A(y^*) = 0. \label{kkt_2}
\end{eqnarray}
Multiplying both sides of the second equation in \eqref{kkt_1} from the right by ${R^*}^\top$, and using the two equations in \eqref{kkt_2},  we obtain $\lambda^* = 0$ and thus
\begin{equation}\label{key}
 V^*\left(\A(y^*)\right)^\top = 0.
\end{equation}

We now show that
\begin{equation}\label{inclusion}
{R^*}^\top V^*\in N_{\{X:\; \rank(X)\le m\}}(\A(y^*)).
\end{equation}
To proceed, recall that $R^*\in \R^{1\times p}$, which implies $\rank({R^*}^\top V^*) \le 1$. According to \cite[Proposition 3.6]{Luke13}, in order to establish \eqref{inclusion}, it now remains to show that
\begin{equation}\label{addthis}
 [\ker({R^*}^\top V^*)]^{\perp}\cap[\ker(\A(y^*))]^{\perp} = \{0\}.
\end{equation}
To this end, take any $z\in[\ker({R^*}^\top V^*)]^{\perp}\cap[\ker(\A(y^*))]^{\perp}$. Then we have in particular that $z\in[\ker({R^*}^\top V^*)]^{\perp} = \Range({V^*}^\top R^*)$. This together with \eqref{key} implies that
$\A(y^*)z \in \A(y^*) \Range({V^*}^\top R^*) = \{0\}$.
Thus, we must have $z\in\ker\left(\A(y^*)\right)\cap\left[\ker\left(\A(y^*)\right)\right]^\perp$ and consequently $z = 0$. This proves \eqref{addthis} and hence \eqref{inclusion}. The desired relation \eqref{app_st} now follows immediately from \eqref{kkt_1} and \eqref{inclusion}.

Suppose in addition that $\rank(\A(y^*)) = m$. Then we have
\begin{equation*}
\begin{split}
&\A^*\left(N_{\{X:\; \rank(X)\le m\}}(\A(y^*))\right) \overset{\rm (a)}\subseteq \A^*\left(\widehat{N}_{\{X:\; \rank(X)\le m\}}(\A(y^*))\right)\\
& \overset{\rm (b)}\subseteq \widehat{N}_{\{y:\; \rank(\A(y)) \le m\}}(y^*)\overset{\rm (c)}\subseteq N_{\{y:\; \rank(\A(y)) \le m\}}(y^*),
\end{split}
\end{equation*}
where (a) follows from \cite[Proposition~3.6]{Luke13} and the fact that proximal normal vectors are regular normal vectors \cite[Example~6.16]{RW98}, (b) follows from \cite[Theorem~10.6]{RW98} and (c) follows from \cite[Proposition~6.5]{RW98}.
This together with \eqref{app_st} proves \eqref{true_st}. This completes the proof.
\end{proof}

We next show that if the stationary point $R^*$ of $\widetilde \Psi$ in \eqref{obj} is obtained via a gradient-based {\em descent} optimization method with a suitably chosen initial point, then the $y^*$ that attains the infimum in \eqref{R_form} will satisfy the condition on function value improvement in Definition~\ref{def_ps}.

\begin{theorem}\label{thm2}
Consider \eqref{line_form} with setting \eqref{case_1} or \eqref{case_2}. Let $y_b\in\R^d$ satisfy $\rank\left(\A(y_b)\right) \le m$ and let $R^0\in\R^{1\times p}\backslash\{0\}$ satisfy $R^0\A(y_b) = 0$. Then for any $\widetilde{R}\in \R^{1\times p}\backslash\{0\}$ with $\Psi(\widetilde{R}) \le \Psi(R^0)$, we have
\begin{equation}\label{fun_dec}
\|y_{\widetilde{R}} - \widehat{y}\| \le \|y_b - \widehat{y}\|,
\end{equation}
where $y_{\widetilde{R}}$ attains the infimum in \eqref{R_form} when $R =\widetilde{R}$.
\end{theorem}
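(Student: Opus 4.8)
The plan is to exploit the definition of $\Psi$ as an infimal value and chase the chain of inequalities through the hypotheses. First I would observe that $R^0 \in \R^{1\times p}\backslash\{0\}$ satisfies $R^0\A(y_b)=0$, which means $y_b$ is feasible for the inner minimization problem defining $\Psi(R^0)$ in \eqref{R_form}; hence, by taking $y=y_b$ as a (sub)optimal point, we get $\Psi(R^0) \le \frac12\|y_b - \widehat{y}\|^2$. Next, by the hypothesis $\Psi(\widetilde R) \le \Psi(R^0)$, we chain these together to obtain $\Psi(\widetilde R) \le \frac12\|y_b - \widehat{y}\|^2$.

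The second half is to relate $\Psi(\widetilde R)$ back to $y_{\widetilde R}$. Since $\Psi$ is smooth on $\R^{1\times p}\backslash\{0\}$ (by the preceding theorem), and more importantly since $y_{\widetilde R}$ is defined to attain the infimum in \eqref{R_form} at $R=\widetilde R$, we have by definition $\Psi(\widetilde R) = \frac12\|y_{\widetilde R} - \widehat{y}\|^2$. One point to verify here is that the infimum in \eqref{R_form} is actually attained, so that $y_{\widetilde R}$ is well-defined: this follows because the feasible set $\{y: \widetilde R\A(y)=0\}$ is a nonempty (it contains $0$ when $\widehat y$... actually it contains the linear subspace $\ker(G_{\widetilde R})$, which is nonempty) closed affine subspace (the kernel of the linear map $y\mapsto (\widetilde R \A(y))^\top$), and the objective $\frac12\|y-\widehat y\|^2$ is coercive and strictly convex on it, so a unique minimizer exists.

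Combining the two halves: $\frac12\|y_{\widetilde R} - \widehat y\|^2 = \Psi(\widetilde R) \le \Psi(R^0) \le \frac12\|y_b - \widehat y\|^2$, and taking square roots gives exactly \eqref{fun_dec}. There is no real obstacle here — this is essentially a bookkeeping argument with the definitions — but the one place that requires a moment's care is checking that $y_b$ is genuinely admissible in the infimum defining $\Psi(R^0)$, which is precisely why the hypothesis $R^0\A(y_b)=0$ was imposed. The role of this theorem in the larger picture is then clear: together with Theorem~\ref{thm1}, it certifies that the $y^*$ extracted from a descent-method stationary point $R^*$ (started from an $R^0$ that annihilates a reference feasible point $y_b$) satisfies both conditions (a) and (b) of Definition~\ref{def_ps}, i.e.\ it is a genuine pseudo-projection of $\widehat y$ onto $\{y:\rank(\A(y))\le m\}$ with respect to $y_b$.
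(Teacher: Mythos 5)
Your argument is correct and is essentially identical to the paper's proof: both use $R^0\A(y_b)=0$ to get $\Psi(R^0)\le\frac12\|y_b-\widehat y\|^2$ and then chain $\frac12\|y_{\widetilde R}-\widehat y\|^2=\Psi(\widetilde R)\le\Psi(R^0)\le\frac12\|y_b-\widehat y\|^2$. The extra remarks on attainment of the infimum are a harmless (and reasonable) addition that the paper leaves implicit.
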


\begin{proof}
First, we see from $R^0\A(y_b) = 0$ and the definition of $\Psi$ in \eqref{R_form} that
$\Psi(R^0) \le \frac12\|y_b - \widehat{y}\|^2$.
This together with the assumption $\Psi(\widetilde{R})\le \Psi(R^0)$ and the fact that $y_{\widetilde{R}}$ attains the infimum in \eqref{R_form} when $R =\widetilde{R}$ shows that
\begin{equation*}
 \frac12\|y_{\widetilde{R}} - \widehat{y}\|^2 =  \Psi(\widetilde{R}) \le \Psi(R^0) \le \frac12\|y_b - \widehat{y}\|^2.
\end{equation*}
This completes the proof.
\end{proof}

\begin{remark}[Obtaining pseudo-projection in cases \eqref{case_1} or \eqref{case_2}]\label{rem4_5}
Let $y_b\in \R^d$ satisfy $\rank(\A(y_b))\le m$ and let $R^0\in\R^{1\times p}\backslash\{0\}$ satisfy $R^0\A(y_b) = 0$. Then one can apply some standard gradient-based descent methods such as those implemented in SLRA \cite{MK14} for solving \eqref{R_prob} with $R^0$ as the initialization: these methods typically generate a sequence $\{R^k\}$ so that any accumulation point, say $R^*$, is stationary for $\widetilde\Psi$ in \eqref{obj} and satisfies $\Psi(R^*)\le\Psi(R^0)$. Suppose $y_{R^*}$ achieves the infimum in \eqref{R_form} when $R =R^*$. Then we know from \eqref{true_st} in Theorem~\ref{thm1} and \eqref{fun_dec} in Theorem~\ref{thm2} that if $\rank(\A(y_{R^*})) = m$ holds, then
$y_{R^*}\in\P^s_{\rank(\A(y))\le m}(\widehat{y}; y_b)$.
\end{remark}

\subsection{Conjecture related to Theorem~\ref{thm1}}
In this subsection, we revisit the assumption $\rank(\A(y^*)) = m$ in Theorem~\ref{thm1}. We would like to understand how likely such a condition is fulfilled by the $y^*$ that achieves the infimum in \eqref{R_form}, with $R = R^*$ being a stationary point of $\widetilde \Psi$ in \eqref{obj}. Notice that if $R^*$ is indeed an optimal solution of $\widetilde \Psi$, such a $y^*$ is an optimal solution of \eqref{line_form}. Thus, we will first study whether $\rank(\A(y^*)) = m$ when $y^*$ is an optimal solution of \eqref{line_form}. Specifically, we make the following conjecture:
\begin{conjecture}\label{guess}
Let $s$ be a positive integer. Suppose that $\widehat{y}\in \R^n$ satisfies the condition $\rank(\H_{s+1}\left(\widehat{y})\right) = s+1$ and let $y^*$ solve the following optimization problem:
\begin{equation}\label{citeme}
\min_{y\in\R^n}\ \ \frac12\|y - \widehat{y}\|^2 \ \ {\rm s.t.}\ \ \rank\left(\H_{s+1}(y)\right) \le s.
\end{equation}
Then we have $\rank\left(\H_{s+1}(y^*)\right) = s$.
\end{conjecture}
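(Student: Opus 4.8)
The plan is to argue by contradiction: suppose $y^*$ solves \eqref{citeme} but $\rank(\H_{s+1}(y^*)) = \rho \le s-1$ (it cannot exceed $s$ by feasibility). The intuition is that if the rank of the optimal solution drops strictly below $s$, then the optimal solution is ``over-constrained'' and one should be able to perturb $y^*$ in a direction that both keeps the rank at most $s$ and strictly decreases the distance to $\widehat{y}$, contradicting optimality. To make this precise, I would first exploit the first-order optimality condition: since $\rank(\H_{s+1}(y^*)) = \rho$, the rank-$\rho$ matrix variety is smooth at $\H_{s+1}(y^*)$ and I can write down $N_{\{X:\ \rank(X)\le s\}}(\H_{s+1}(y^*))$ using \cite[Proposition~3.6]{Luke13} (or the standard description via the SVD of $\H_{s+1}(y^*)$), leading to $y^* - \widehat{y} \in \H_{s+1}^*\!\left(N_{\{X:\ \rank(X)\le s\}}(\H_{s+1}(y^*))\right)$, i.e.\ $\widehat{y} - y^*$ lies in the image under $\H_{s+1}^*$ of a cone of matrices whose row and column spaces are confined to the orthogonal complements of those of $\H_{s+1}(y^*)$.

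The second and main step is a dimension count / degeneracy argument. Because $\rho \le s-1 < s \le \lfloor (n-1)/2 \rfloor$, both factor spaces have ``room to spare'': the kernel of $\H_{s+1}(y^*)$ (as a $(s+1)\times(n-s)$ matrix) has dimension at least $n - s - \rho \ge 2$, and similarly on the left. I would then produce an explicit tangent direction $\eta \in \R^n$ with $\langle \eta, \widehat{y} - y^* \rangle > 0$ such that $y^* + \varepsilon\eta$ remains rank-$\le s$ for small $\varepsilon > 0$. The cleanest route is probably to use the Hankel structure directly: a Hankel matrix of size $(s+1)\times(n-s)$ has rank $\le s$ iff it is annihilated on the left by some nonzero row vector $R\in\R^{1\times(s+1)}$ (the SLRA observation quoted in the excerpt), and the set of $y$ with a \emph{fixed} annihilator $R$ is an affine subspace $\ker(G_R)$ of $\R^n$ of dimension $n - (n-s) = s$. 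Since $\rank(\H_{s+1}(y^*)) = \rho < s$, the point $y^*$ admits a whole family of annihilators $R$; picking an $R$ generically and moving within $\ker(G_R)$ gives an $s$-dimensional affine set through $y^*$, all feasible, and if $y^*$ were not already the projection of $\widehat{y}$ onto that affine set we would have a contradiction with optimality of $y^*$ for \eqref{citeme}. So the argument reduces to: the projection of $\widehat{y}$ onto $\ker(G_R)$ must equal $y^*$ for \emph{every} valid annihilator $R$, and then one shows this forces $\widehat{y}$ itself to be annihilated by $R$, i.e.\ $\rank(\H_{s+1}(\widehat{y})) \le s$, contradicting the hypothesis $\rank(\H_{s+1}(\widehat{y})) = s+1$.

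I would carry out this last implication as follows: if $y^* = \P_{\ker(G_R)}(\widehat{y})$ for two distinct valid annihilators $R_1, R_2$, then $\widehat{y} - y^* \perp \ker(G_{R_1})$ and $\widehat{y} - y^* \perp \ker(G_{R_2})$, so $\widehat{y} - y^* \in \Range(G_{R_1}^*) \cap \Range(G_{R_2}^*) = \H_{s+1}^*(R_1^\top \R^{1\times(n-s)}) \cap \H_{s+1}^*(R_2^\top\R^{1\times(n-s)})$; a computation with the banded structure of $\widehat R$ appearing in Lemma~\ref{lemma_LICQ} (the matrix there is exactly $G_R^*$ up to transpose) should show this intersection is small enough that $\widehat{y}-y^*$ lying in it, together with $y^*$ being rank-deficient, forces $\widehat{y}$ to be rank-$\le s$. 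The main obstacle I anticipate is precisely this final structural/linear-algebraic step — controlling the intersection $\bigcap_R \Range(G_R^*)$ over all valid annihilators $R$ and showing it is too thin to contain $\widehat y - y^*$ unless $\widehat y$ is itself low-rank. The Hankel (banded Toeplitz) structure of $G_R^*$ is what should make this work, but turning the ``generic annihilator'' heuristic into a rigorous statement — especially handling the case where the valid annihilators form a positive-dimensional family and choosing two of them in sufficiently general position — is the delicate part, and is likely why the authors state it as a conjecture rather than a theorem.
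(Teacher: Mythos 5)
First, a point of orientation: the paper does not prove this statement --- it is stated as a \emph{conjecture}, the authors explicitly write that they do not know whether it holds for general $s$, and the only case settled in the paper is $s=1$. That proof is entirely different from your approach: for $s=1$ the only feasible point with rank strictly below $s$ is $y=0$, and the authors simply exhibit, by an explicit two-case construction, a feasible $\widebar{y}$ with $\rank(\H_2(\widebar{y}))=1$ and $\|\widebar{y}-\widehat{y}\|<\|\widehat{y}\|$, which rules out $y^*=0$. That construction does not generalize, because for $s\ge 2$ the set of rank-deficient feasible points is no longer a single point.

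Your approach is genuinely different, but as written it has exactly the gap you flag, and the specific route you propose for closing it does not work. The part of your sketch that is correct and rigorous: if $\rank(\H_{s+1}(y^*))=\rho\le s-1$, the left kernel $K$ of $\H_{s+1}(y^*)$ has dimension $s+1-\rho\ge 2$; for every nonzero $R\in K$ the linear subspace $\ker(G_R)=\{y:\, R\H_{s+1}(y)=0\}$ is contained in the feasible set and contains $y^*$, so $y^*$ must be the orthogonal projection of $\widehat{y}$ onto it, giving $\widehat{y}-y^*\in \ker(G_R)^\perp=\Range(G_R^*)$, which under the identification of $\R^n$ with polynomials of degree $\le n-1$ is the set of multiples of $R(z)$ by polynomials of degree $\le n-s-1$. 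The flaw is in your final step: for just two annihilators $R_1,R_2$, the intersection $\Range(G_{R_1}^*)\cap\Range(G_{R_2}^*)$ consists of the degree-$\le n-1$ multiples of ${\rm lcm}(R_1,R_2)$ and has dimension at least $n-2s>0$, so no choice of two annihilators ``in general position'' can force $\widehat{y}-y^*=0$. What does appear to close the gap is using the whole pencil rather than two members of it: write $d=\gcd(R_1,R_2)$ and $R_i=d\tilde{R}_i$; then the polynomials $\tilde{R}_1+t\tilde{R}_2$ for $t\in\R$ are pairwise coprime, nonzero, and of degree $\ge 1$ for all but at most one $t$, and each must divide the polynomial $\widehat{y}-y^*$ of degree $\le n-1$; hence $\widehat{y}-y^*=0$, so $\rank(\H_{s+1}(\widehat{y}))=\rho\le s-1$, contradicting the hypothesis. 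If that pencil argument withstands careful checking, it would settle the conjecture in full generality --- i.e., prove strictly more than the paper does --- so it is worth writing up precisely rather than leaving as a heuristic.
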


We do not know whether Conjecture~\ref{guess} holds true for all positive numbers $s$. However, we are able to prove that it holds true when $s = 1$.
\begin{proposition}
  Conjecture~\ref{guess} holds true when $s = 1$.
\end{proposition}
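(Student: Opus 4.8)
The plan is to reduce the problem to a concrete finite-dimensional calculation. When $s=1$, the matrix $\H_{2}(y)$ is the $2\times(n-1)$ Hankel matrix whose rows are $[y(1),\dots,y(n-1)]$ and $[y(2),\dots,y(n)]$. The condition $\rank(\H_2(\widehat y))=2$ says $\widehat y$ is not a geometric-type sequence, and the constraint $\rank(\H_2(y))\le 1$ forces $y$ to be a sequence for which the two rows are proportional. Concretely, $\rank(\H_2(y))\le 1$ iff all $2\times2$ minors $y(i)y(j+1)-y(i+1)y(j)$ vanish, which (excluding the trivial $y=0$, impossible here since then the objective would be $\frac12\|\widehat y\|^2$ and one can do strictly better) means there is a scalar $\rho$ with $y(k+1)=\rho\,y(k)$ for all $k$; equivalently $y = y(1)\,(1,\rho,\rho^2,\dots,\rho^{n-1})$. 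I will parametrize the feasible set (minus the origin) by $(a,\rho)\mapsto a\,v(\rho)$ where $v(\rho):=(1,\rho,\dots,\rho^{n-1})^\top$, and note $\rank(\H_2(a\,v(\rho)))=1$ for every $a\neq0$. So the claim $\rank(\H_2(y^*))=s=1$ is equivalent to showing $y^*\neq 0$, i.e. the optimal value is strictly less than $\frac12\|\widehat y\|^2$.

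The key steps, in order: (i) Observe $y=0$ is feasible with objective $\frac12\|\widehat y\|^2$, so the optimal value is $\le \frac12\|\widehat y\|^2$. (ii) Show that this bound is not attained at $0$, by exhibiting a feasible direction of strict decrease. Pick any index $k$ with $\widehat y(k)\neq 0$ and $\widehat y(k-1)\widehat y(k+1)$-type neighbors arranged so the corresponding rank-one sequence $v(\rho)$ has a nonzero correlation with $\widehat y$; more simply, since $\widehat y\neq 0$, choose $\rho$ so that $\langle \widehat y, v(\rho)\rangle\neq 0$ (this is possible because $\langle\widehat y,v(\rho)\rangle$ is a nonzero polynomial in $\rho$ — its coefficients are the entries of $\widehat y$, not all zero). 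Then along $t\mapsto t\,v(\rho)$ the objective $\frac12\|tv(\rho)-\widehat y\|^2 = \frac12\|\widehat y\|^2 - t\langle\widehat y,v(\rho)\rangle + \frac{t^2}{2}\|v(\rho)\|^2$ is strictly below $\frac12\|\widehat y\|^2$ for small $t>0$. Hence the optimal value is $<\frac12\|\widehat y\|^2$, so $y^*\neq 0$, so $\rank(\H_2(y^*))=1$.

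I should double-check one subtlety: the feasible set $\{y:\rank(\H_2(y))\le 1\}$ is closed and the objective is coercive, so an optimal $y^*$ exists; and every nonzero element of this set has rank exactly $1$, so the only way to have $\rank(\H_2(y^*))<1$ is $y^*=0$, which step (ii) rules out. One more point worth verifying carefully: that the set of nonzero rank-$\le1$ Hankel sequences is exactly $\{a\,v(\rho):a\neq0,\rho\in\R\}$ together with the "degenerate geometric" sequences like $(0,\dots,0,c)$ or $(c,0,\dots,0)$ — these arise as limits ($\rho\to\infty$ after rescaling, or $\rho=0$). But all of these still have $\rank(\H_2(\cdot))=1$ when nonzero, so they cause no trouble; they only need to be included when invoking existence/closedness, not in the decrease argument.

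The main obstacle here is essentially bookkeeping rather than a deep difficulty: one must be slightly careful that the "rank $\le 1$" Hankel variety, as a subset of $\R^n$, is correctly described (it is not literally the image of a single smooth chart because of the degenerate sequences at infinity), and that the strict-decrease direction $v(\rho)$ is genuinely feasible (it is, since $\rank(\H_2(v(\rho)))=1\le s$ for the chosen $s=1$). Everything else is a one-variable quadratic estimate. I do not expect this argument to extend to general $s$: for $s\ge 2$ the rank-$\le s$ Hankel variety is no longer a union of geometric sequences, its nonzero points can have rank strictly between $1$ and $s$, and the "feasible direction of strict decrease toward a full-rank-$s$ point" argument would need a genuinely different idea — which is presumably why the general case remains a conjecture.
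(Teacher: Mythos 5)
Your proof is correct and takes essentially the same route as the paper: both reduce the claim to showing $y^*\neq 0$ (every nonzero feasible point already has $\rank(\H_2(y))=1$) and then produce a geometric sequence $v(\rho)=[1\ \rho\ \cdots\ \rho^{n-1}]^\top$ with $\langle\widehat y, v(\rho)\rangle\neq 0$ yielding a strict decrease from the value $\tfrac12\|\widehat y\|^2$ attained at the origin; the paper merely splits into the cases $\widehat y(1)\neq 0$ or $\widehat y(n)\neq 0$ versus both zero, whereas your nonzero-polynomial observation handles all cases uniformly. The only nit is that the decrease along $t\,v(\rho)$ holds for small $t$ of the same sign as $\langle\widehat y,v(\rho)\rangle$, not necessarily $t>0$.
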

\begin{proof}
Since $s = 1$, we only need to show that there exists $\widebar{y}\in\R^n$ with $\rank\left(\H_2(\widebar{y})\right) = 1$ and $\|\widebar{y}- \widehat{y}\|^2 < \|\widehat{y}\|^2$. First of all, since $\rank(\H_2(\widehat y)) = 2$, we must have $n\ge 3$. We consider two cases:
\vspace{-1mm}
\begin{equation*}
{\rm (i)}\ \widehat{y}(1) \neq 0\ \ {\rm or}\ \  \widehat{y}(n) \neq 0; \ \ \ \  {\rm (ii)}\ \widehat{y}(1) = 0 \ \ {\rm and}\ \ \widehat{y}(n) = 0.
\end{equation*}
For case (i), we let $\widebar{y} = [\widehat{y}(1)\ 0 \cdots 0]^\top $ when $\widehat{y}(1) \neq 0$, and $\widebar{y} = [0\cdots 0\ \widehat{y}(n)]^\top $ when $\widehat{y}(n) \neq 0$. Then $\rank\left(\H_2(\widebar{y})\right) = 1$ and
\vspace{-2.5mm}
\begin{equation*}
\|\widebar{y} - \widehat{y}\|^2 = \sum_{i=2}^n\widehat{y}^2(i) < \|\widehat{y}\|^2 \ \  {\rm or}\ \  \|\widebar{y} - \widehat{y}\|^2 = \sum_{i=1}^{n-1}\widehat{y}^2(i) < \|\widehat{y}\|^2.
\end{equation*}

\vspace{-1mm}
\noindent
Now we consider case (ii). Notice that there exists at least one nonzero element in $\{\widehat{y}(2),\cdots,\widehat{y}(n-1)\}$ because $\rank\left(\H_2(\widehat{y})\right) = 2$. Hence, there are at most $n - 2$ distinct real roots for the polynomial equation $\sum_{i=2}^{n-1}\widehat{y}(i)(z)^{i-1} = 0$. Let $\bar{z}\neq 0$ be a real number different from these roots. Then we have $\sum_{i=0}^{n-1}(\widebar{z})^{2i} > 0$. Let
\vspace{-2mm}
\begin{equation*}
\widebar{c} = \sum_{i=2}^{n-1}\widehat{y}(i)(\bar{z})^{i-1}\Big{/}\sum_{i=0}^{n-1}(\bar{z})^{2i} \ \ {\rm and}\ \ \widebar{y} = [\widebar{c}\ \ \widebar{c}\widebar{z}\ \cdots\ \widebar{c}\widebar{z}^{n-1}]^\top.
\end{equation*}
Then $\widebar{c}\neq 0$ and $\rank\left(\H_2(\widebar{y})\right) = 1$. Consequently,
\begin{equation*}
\begin{split}
&\|\widebar{y}-\widehat{y}\|^2 - \|\widehat{y}\|^2 = \|\widebar y\|^2 - 2{\widebar y}^\top{\widehat y}= \widebar{c}^2\sum_{i=0}^{n-1}(\bar{z})^{2i}  - 2\widebar{c}\sum_{i=2}^{n-1}\widehat{y}(i)(\bar{z})^{i-1} =  - \widebar{c}^2\sum_{i=0}^{n-1}(\bar{z})^{2i}  < 0.
\end{split}
\end{equation*}
This completes the proof.
\end{proof}

\section{Numerical experiments}\label{sec_numerical}
In this section, we will conduct numerical experiments for our hybrid penalty method, {\em i.e.}, Algorithm~\ref{alg_hyb}. All numerical experiments are performed in Matlab R2019a on a 64-bit PC with 3.8 GHz Intel Core i5 Quad-Core and 8GB of DDR4 RAM.

We consider the following problem with two rank constraints:
\vspace{-2mm}
\begin{eqnarray}
% \nonumber to remove numbering (before each equation)
  \min_{y_1\in\R^n,y_2\in\R^n} & &\ \ \frac12\|y_1 - \widebar{y}_1\|_{W}^2 + \frac12\|y_2 - \widebar{y}_2\|_{W}^2 \nn\\
  {\rm s.t.} & &\ \  \rank\left(\H_{n_1 + n_c +1}(y_1)\right) \le n_1 + n_c, \nn\\
   & & \ \ \rank\left(\H_{n_2 + n_c + 1}(y_2)\right) \le n_2 + n_c, \label{example}\\
   & & \ \  \rank\left(\left[\H_{n_1 + n_2 + n_c + 1}(y_1) \ \ \H_{n_1 + n_2 + n_c + 1}(y_2)\right]\right) \le n_1 + n_2 + n_c,\nn
\end{eqnarray}
where  $\|y\|_W:= \sqrt{y^\top W y}$, $W$ is the $n\times n$ diagonal matrix so that $W(i,i)$ equals $1$ when $i$ is odd, and equals $10$ when $i$ is even, $n_1$, $n_2$ and $n_c$ are given positive integers, and $\widebar{y}_1\in\R^n$ and $\widebar{y}_2\in\R^n$ are known noisy  signals.

Let \textbf{HB\_1}, \textbf{HB\_2} and \textbf{HB\_3} represent the three hybrid penalty methods which solve \eqref{example} by Algorithm~\ref{alg_hyb} via the reformulation \eqref{uni_form} with \textbf{Variant I}, \textbf{Variant II} and \textbf{Variant III} discussed in Section~\ref{sdcam_sec} respectively. Let \textbf{AP} represent the alternating pseudo-projection algorithm \eqref{alg_AP} applied directly to the sets $\Omega_1$ and $\Omega_2$ defined in \eqref{omega_ap}, constructed based on the data from \eqref{example}.

\vspace{2mm}

\noindent
{\bf Data generation:} We set  $n = 50$ and consider two 3-tuples $(n_1, n_2, n_c) = (2, 2, 2)$ and $(n_1, n_2, n_c) = (2, 6, 4)$. For each 3-tuple, we first randomly generate two signals $y_1$ and $y_2$ from two marginally stable linear time-invariant systems of order at most $n_1 + n_c$ and $n_2 + n_c$ respectively, which have $n_c$ common poles. Then we let $\widebar{y}_1 =  y_1 + \sigma\cdot W^{-1/2}\xi_1$ and $\widebar{y}_2 = y_2 + \sigma\cdot W^{-1/2}\xi_2$, where $\sigma = 0.1$ is the noise factor, and  $\xi_1$ and $\xi_2$  are random vectors with i.i.d. standard Gaussian entries.

\vspace{2mm}

\noindent
{\bf HB\_1, HB\_2 and HB\_3:}  In Algorithm~\ref{alg_npg}, we set $L_{\max} = 10^8$, $L_{\min} = 10^{-8}$, $\tau = 2$, $c = 10^{-4}$, $M = 4$,  $L_0^0 = 1$ and for $l \ge 1$,
\vspace{-2mm}
\begin{equation*}
L_l^0 = \max\left\{\min\left\{ \frac{{(y^l - y^{l-1})}^\top\left(\nabla h(y^{l}) - \nabla h(y^{l-1})\right)}{\|y^{l} - y^{l-1}\|^2}, L_{\max}\right\}, L_{\min}\right\}.
\end{equation*}

\vspace{-2mm}
\noindent
%which is the inverse of the Barzalai-Borwein stepsize.
All pseudo-projection subproblems that arise are approximately solved by calling SLRA \cite{MK14} with default setting (except that the $R^0$ is specified as in Remark~\ref{rem4_5}). We terminate Algorithm~\ref{alg_npg} when the number of iterations exceeds 10$^8$ or
\vspace{-2mm}
\begin{equation*}
\frac{\|y^{l} - y^{l-1}\|}{\max\left\{\|y^{l}\|, 1\right\}}  < \epsilon_t/\widebar{L}_{l-1} \ \ \ {\rm or}\ \ \ \frac{\left|F_{\lambda_t}(y^{l} )- F_{\lambda_t}(y^{l-1})\right|}{\max\left\{ |F_{\lambda_t}(y^{l})|, 1\right\}} < 10^{-10}.
\end{equation*}

\vspace{-1mm}
For the penalty method in Algorithm~\ref{alg_hyb}, we set  $y^{{\rm feas}} = 0$,  $\lambda_t = \lambda_{t-1}/5$ with initial $\lambda_0 = 0.1$, $\widebar{\lambda} = 10^{-4}$ and $\epsilon_t = \max\left\{\epsilon_{t-1}/1.5, 10^{-6}\right\}$ with initial $\epsilon_0 = 10^{-5}$. Let  $\widebar{y} = vec\left(\widebar{y}_1\ \widebar{y}_2\right)$. We set the initial point $y^0$ for {\bf HB\_1} and {\bf HB\_2} as a pseudo-projection of $\widebar{y}$ onto $\Omega_1$ and $\Omega_2$ respectively, obtained by calling SLRA in \cite{MK14} with default setting (the reference point is the origin). For {\bf HB\_3}, we set $y^0 = \widebar{y}$.

For the post-processing method in Algorithm~\ref{alg_hyb}, we also call SLRA in \cite{MK14} with default settings to approximately compute a pseudo-projection (except that the $R^0$ is specified as in Remark~\ref{rem4_5}), and terminate it when the number of iterations exceeds 10$^5$ or
\begin{equation*}
\frac{\max\{\|x^{t} - x^{t-1}\|, \|z^{t} - z^{t-1}\|\}}{\max\{\|x^{t-1}\|, \|z^{t-1}\|,1\}} < 10^{-10}.
\end{equation*}
We output $z^t$ as the approximate solution.

\noindent
{\bf AP:} In this method, we start at $\widebar{y} = vec\left(\widebar{y}_1\ \widebar{y}_2\right)$ and call SLRA in \cite{MK14} with default setting (except that the $R^0$ is specified as in Remark~\ref{rem4_5}) to approximately compute a pseudo-projection onto $\Omega_1$ and $\Omega_2$ defined in \eqref{omega_ap} (the initial reference points are the origin). We also output $z^t$ as the approximate solution.

\vspace{2mm}

\noindent
{\bf Numerical results:}
In Figure~\ref{all_fval}, we compare the four methods {\bf AP}, \textbf{HB\_1}, \textbf{HB\_2} and \textbf{HB\_3}  in terms of  terminating function values over 100 random instances for $(n_1, n_2, n_c) = (2, 2, 2)$ and over 30 random instances for $(n_1, n_2, n_c) = (2, 6, 4)$. \footnote{For each 3-tuple, we first generate $y_1$ and $y_2$ as described above. For these two fixed signals, we generate 100 (and, resp., 30) random noisy signals  $\bar{y}_1$ and $\bar{y}_2$ and solve the corresponding instances.} One can see that while the three hybrid penalty methods \textbf{HB\_1}, \textbf{HB\_2} and \textbf{HB\_3} have comparable performance, they always outperform {\bf AP}.

In Figure~\ref{hybrid_only}, we compare the three hybrid penalty methods \textbf{HB\_1}, \textbf{HB\_2} and \textbf{HB\_3} in terms of constraint violation (before and after post-processing) and CPU time over 30 random instances for $(n_1, n_2, n_c) = (2, 6, 4)$. We measure constraint violation by ${\rm log}_{10}(vio)$, with \emph{vio} given by
{\small{
\begin{equation*}  \max\left\{\frac{\dist(\H_{m_1 +1}(y_1^*), \Xi_{m_1})}{\|\H_{m_1 +1}(y_1^*)\|_2}, \frac{\dist(\H_{m_2 +1}(y_2^*), \Xi_{m_2})}{\|\H_{m_2 +1}(y_2^*)\|_2}, \frac{\dist([\H_{m+1}(y_1^*),\H_{m+1}(y_2^*)], \Xi_{m })}{\|[H_{m+1}(y_1^*),\H_{m+1}(y_2^*)]\|_2}\right\},
\end{equation*}}}where $y_1^*$ and $y_2^*$ are computed solutions, $m_1 = n_1 + n_c$, $m_2 = n_2 + n_c$, $m = n_1 + n_2 +n_c$ and $\Xi_{s} := \{Y: \rank(Y) \le s\}$.
One can see that the post-processing scheme significantly reduces constraint violation. On the other hand, \textbf{HB\_2} is faster than \textbf{HB\_1} and \textbf{HB\_3}.

\begin{figure}[h]
  \centering
\begin{subfigure}{.5\textwidth}
  \includegraphics[width=1.0\linewidth]{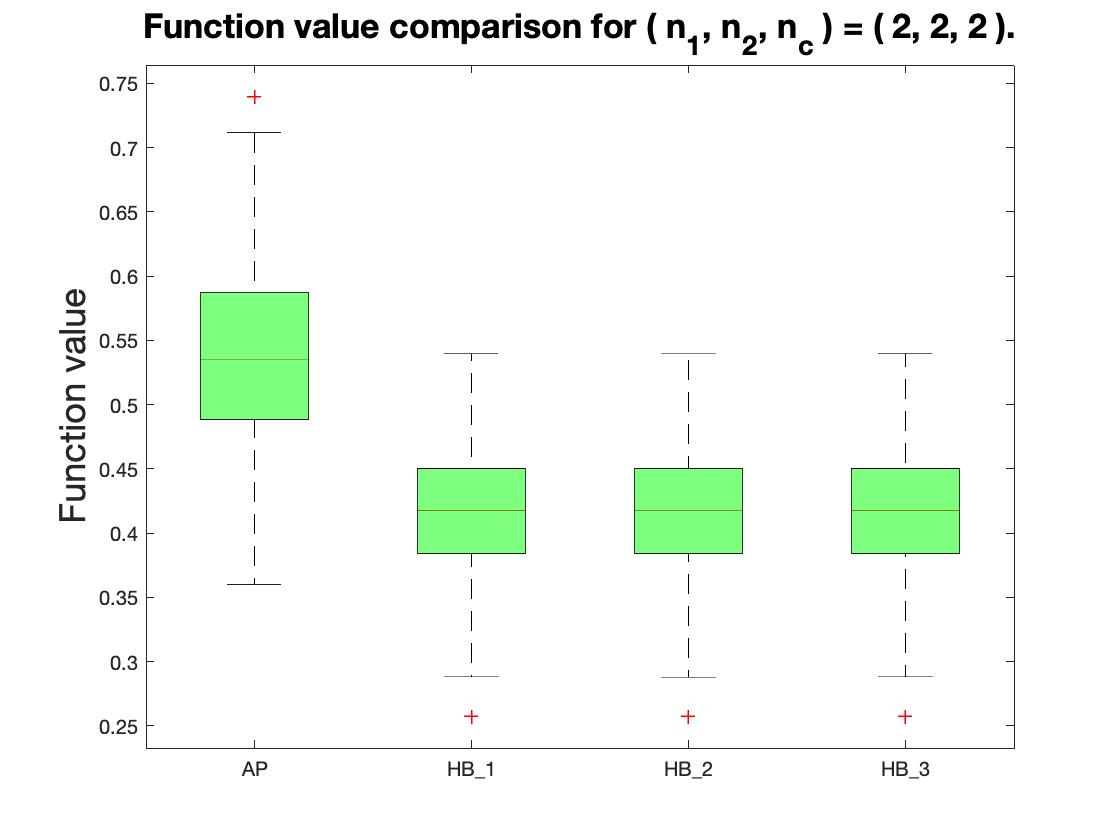}
\end{subfigure}%
\begin{subfigure}{.5\textwidth}
  \includegraphics[width=1.0\linewidth]{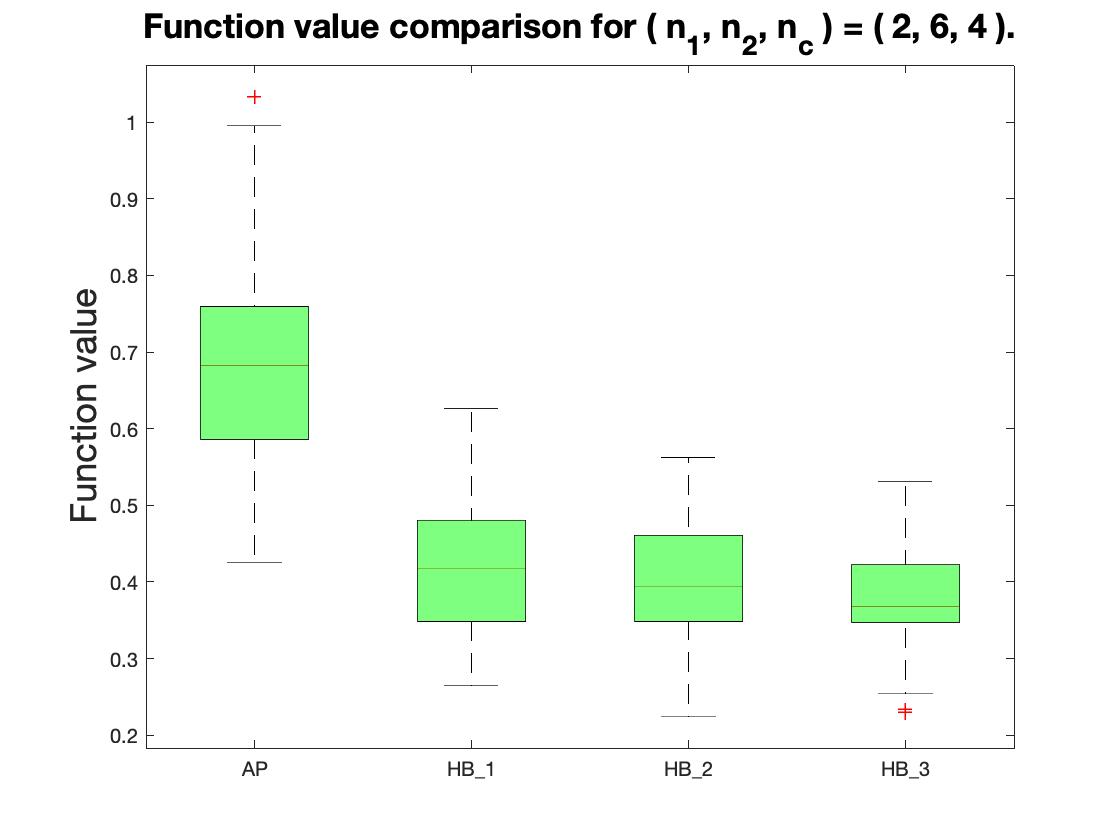}
  \end{subfigure}
\caption{Comparing terminating function values among {\bf AP}, {\bf HB\_1}, {\bf HB\_2} and {\bf HB\_3}.}
\label{all_fval}
\end{figure}

\begin{figure}[h]
  \centering
\begin{subfigure}{.5\textwidth}
  \includegraphics[width=1.0\linewidth]{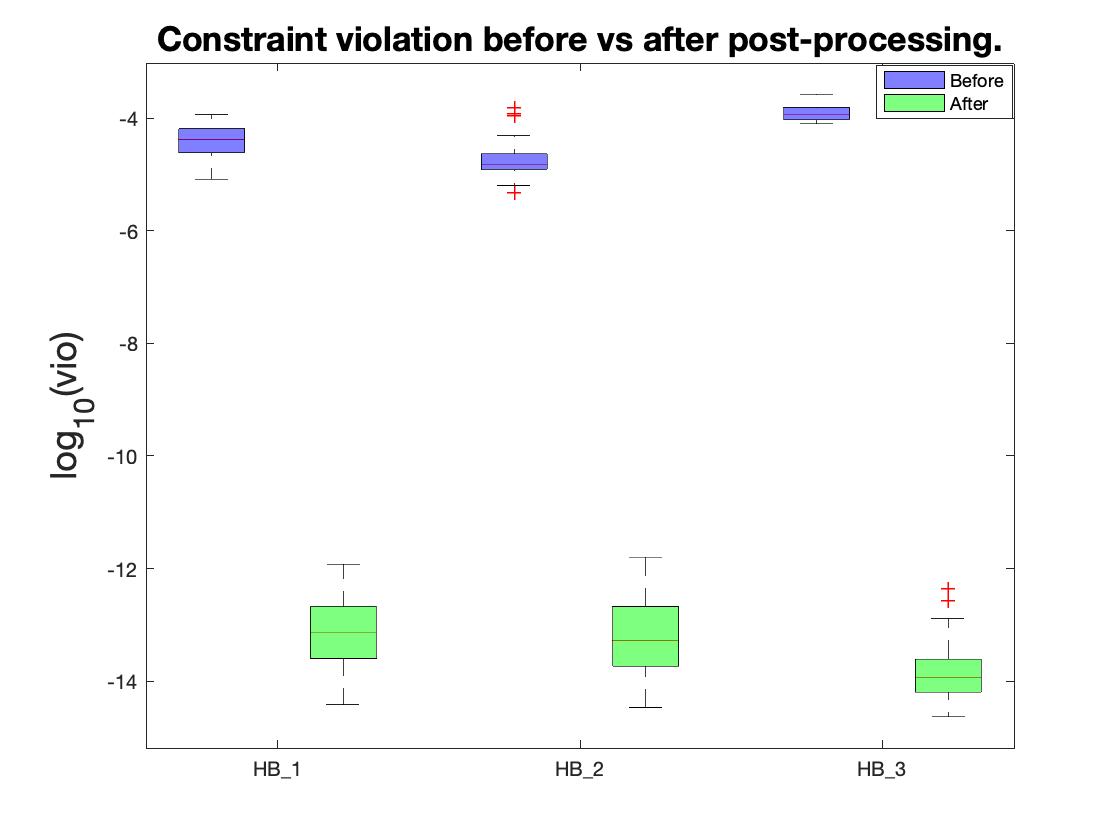}
\end{subfigure}%
\begin{subfigure}{.5\textwidth}
  \centering
  \includegraphics[width=1.0\linewidth]{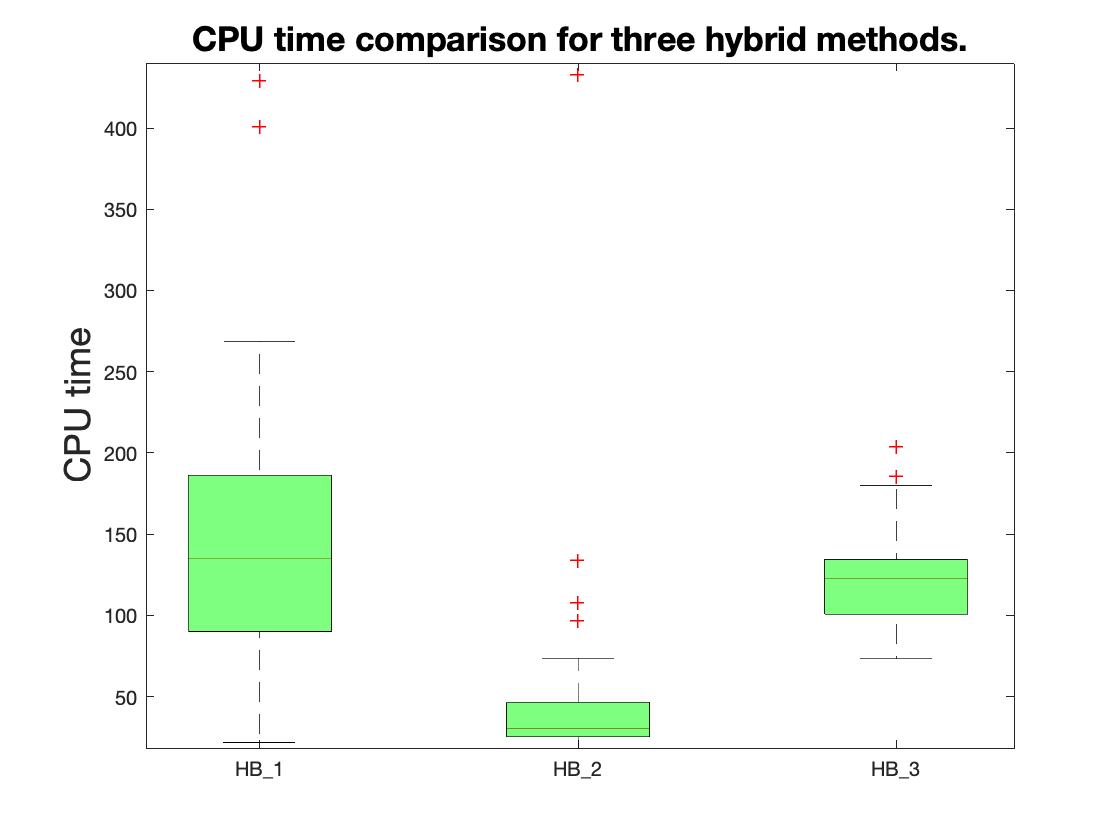}
\end{subfigure}
\caption{Comparing constraint violations and CPU times (in seconds) among {\bf HB\_1}, {\bf HB\_2} and {\bf HB\_3}.}
\label{hybrid_only}
\end{figure}

\section{Concluding remarks}\label{sec_conclusion}
In this paper, we propose a hybrid penalty method for solving \eqref{multi_model}. The hybrid penalty method consists of two parts: a penalty scheme which makes use of a special penalty function as in \cite{LPT18}, and a post-processing method for reducing constraint violation. Both the penalty subproblems and the subproblems in the post-processing method involve the new concept of pseudo-projections: we discussed in Section~\ref{sec_pseudo} in detail how pseudo-projections can be computed efficiently by some existing software such as \cite{MK14}, under mild assumptions.

There are several open questions related to pseudo-projection computation. For instance, we still do not know how likely the condition $\rank(\A(y^*)) = m$ holds for the $y^*$ that achieves the infimum in \eqref{R_form} (with $R = R^*$ being a stationary point of $\widetilde \Psi$ in \eqref{obj}). \footnote{In the numerical experiments in Section~\ref{sec_numerical}, the condition $\rank(\A(y^*)) = m$ almost never fails for the solution $y^*$ returned by SLRA: For over 99.9\% of our calls to SLRA, the $m$th singular value of $\A(y^*)$ is significantly larger than its next singular value.} Even assuming $y^*$ is a solution of \eqref{citeme}, we can only establish $\rank(\H_{s+1}(y^*)) = s$ when $s=1$. The case for $s > 1$ is still open.% Another related open question is whether Conjecture~\ref{guess} holds for $s > 1$.
%Furthermore, it is still unknown to us whether $\rank(\A(y^*)) = m$ holds for $y^*$ obtained by the way in Theorem~\ref{thm1} (notice that now $y^*$ is not even guaranteed to be a global minimizer of \eqref{line_form}).

\appendix
\section{Proof of Theorem~\ref{thm_ap}}
Before proving Theorem~\ref{thm_ap}, we first state two auxiliary lemmas without proofs. The proof of Lemma~\ref{CQ_eq} can be found in the first paragraph in the proof of \cite[Theorem 5.16]{LLM07}, and Lemma~\ref{lemma_be} follows from Theorem~\ref{prox_proof} and the same argument as in the proof of \cite[Theorem~5.16]{LLM07}.

\begin{lemma}\label{CQ_eq}
Let $\Omega_1$ and $\Omega_2$ be defined as in \eqref{omega_ap}, $\widebar{y}\in\Omega_1\cap\Omega_2$ and define
\begin{equation}\label{c_def}
\widebar{c}: = \max\left\{\langle u, v\rangle: u\in N_{\Omega_1}(\widebar{y})\cap B, \ \ \  v\in -N_{\Omega_2}(\widebar{y})\cap B\right\},
\end{equation}
where $B$ is the closed unit ball. Then $N_{\Omega_1}(\widebar{y})\cap -N_{\Omega_2}(\widebar{y}) = \{0\}$ if and only if $\widebar c < 1$.
%\begin{equation*}
%N_{\Omega_1}(\widebar{y})\cap -N_{\Omega_2}(\widebar{y})  = \{0\}\Longleftrightarrow \widebar{c} < 1.
%\end{equation*}
\end{lemma}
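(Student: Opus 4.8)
The plan is to prove the equivalence by exploiting two structural facts: that $N_{\Omega_1}(\widebar{y})$ and $N_{\Omega_2}(\widebar{y})$ are closed cones, and that the Cauchy--Schwarz inequality $\langle u, v\rangle \le \|u\|\|v\|$ holds with equality precisely when $u$ and $v$ are positively proportional. First I would verify that $\widebar{c}$ is well-defined as a maximum: the sets $N_{\Omega_1}(\widebar{y})\cap B$ and $-N_{\Omega_2}(\widebar{y})\cap B$ are each compact (closed, being an intersection of a closed normal cone with the closed ball $B$, and bounded, being contained in $B$), so the continuous function $(u,v)\mapsto\langle u,v\rangle$ attains its maximum over their product. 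Since $0$ lies in every cone, taking $u=v=0$ shows $\widebar{c}\ge 0$, and Cauchy--Schwarz together with $\|u\|\le 1$ and $\|v\|\le 1$ gives $\widebar{c}\le 1$; hence $0\le \widebar{c}\le 1$, and the dichotomy reduces to deciding whether $\widebar{c}=1$ or $\widebar{c}<1$.

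For the implication that $N_{\Omega_1}(\widebar{y})\cap -N_{\Omega_2}(\widebar{y})\ne\{0\}$ forces $\widebar{c}=1$, I would take a nonzero $w$ in the intersection and normalize it: since both cones are invariant under multiplication by positive scalars, $u:=w/\|w\|$ lies in $N_{\Omega_1}(\widebar{y})\cap B$ and simultaneously in $-N_{\Omega_2}(\widebar{y})\cap B$. Choosing $v=u$ in \eqref{c_def} yields $\langle u,v\rangle=\|u\|^2=1$, so $\widebar{c}=1$. Contrapositively, $\widebar{c}<1$ implies $N_{\Omega_1}(\widebar{y})\cap -N_{\Omega_2}(\widebar{y})=\{0\}$, which is one direction of the equivalence.

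For the converse I would assume $\widebar{c}=1$ and extract a maximizing pair $(u,v)$ with $u\in N_{\Omega_1}(\widebar{y})\cap B$, $v\in -N_{\Omega_2}(\widebar{y})\cap B$ and $\langle u,v\rangle=1$. The chain $1=\langle u,v\rangle\le\|u\|\|v\|\le 1$ must hold with equality throughout, which forces $\|u\|=\|v\|=1$ and, by the equality case of Cauchy--Schwarz, $u=\alpha v$ for some $\alpha\ge 0$; the norm constraint then pins down $\alpha=1$, so $u=v$. Hence $u=v$ is a common nonzero element of $N_{\Omega_1}(\widebar{y})$ and $-N_{\Omega_2}(\widebar{y})$, showing the intersection is nontrivial, and contrapositively $N_{\Omega_1}(\widebar{y})\cap -N_{\Omega_2}(\widebar{y})=\{0\}$ implies $\widebar{c}<1$. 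The argument is essentially elementary; the only points demanding care are the equality case of Cauchy--Schwarz---specifically confirming that positive proportionality together with unit norms yields $u=v$ rather than $u=-v$---and the observation that the cone structure is exactly what lets us normalize to the unit sphere without leaving the relevant sets.
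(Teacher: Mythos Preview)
Your argument is correct: the well-definedness of $\widebar{c}$, the cone-based normalization for one direction, and the equality case of Cauchy--Schwarz for the other are exactly the ingredients needed, and you have handled each carefully. The paper does not give its own proof of this lemma but points to the first paragraph of the proof of \cite[Theorem~5.16]{LLM07}, where essentially the same elementary compactness-plus-Cauchy--Schwarz reasoning is carried out; your write-up matches that approach.
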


\begin{lemma}\label{lemma_be}
Let $\Omega_1$ and $\Omega_2$ be defined as in \eqref{omega_ap}. Suppose that there exists some $\widebar{y}\in\Omega_1\cap\Omega_2$ such that $\rank(\L(\widebar{y})) = r$ and $N_{\Omega_1}(\widebar{y})\cap -N_{\Omega_2}(\widebar{y})  = \{0\}$. Let $\widebar{c}$ be defined as in \eqref{c_def}. Then for any $c\in(\widebar{c}, 1)$, there exist some $\epsilon > 0$ and $\delta\in[0, \frac{1-c}2)$ such that
\begin{equation}\label{1_ineq}
\left.
\begin{array}{ll}
x\in\Omega_1\cap B_{\epsilon}(\widebar{y}),\ \ u\in N_{\Omega_1}(x)\cap B \\
z\in\Omega_2\cap B_{\epsilon}(\widebar{y}),\ \ v\in -N_{\Omega_2}(z)\cap B
\end{array}
\right\}\Longrightarrow \langle u, v\rangle \le c,
\end{equation}

\vspace{-6mm}
\begin{equation}\label{2_ineq}
\left.
\begin{array}{ll}
x, z\in\Omega_2\cap B_{\epsilon}(\widebar{y})\\
v\in N_{\Omega_2}(z)\cap B
\end{array}
\right\}\Longrightarrow \langle v, x - z\rangle \le \delta\|x - z\|,
\end{equation}
where $B_{\epsilon}(\widebar{y})$ is the closed ball with centre $\widebar{y}$ and radius $\epsilon$, and $B$ is the closed unit ball.
\end{lemma}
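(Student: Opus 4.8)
The statement splits into two independent estimates, \eqref{1_ineq} and \eqref{2_ineq}, and the plan is to produce a radius for each and then let $\epsilon$ be their minimum. The first estimate is a robustness-of-the-angle-condition statement and rests only on the outer semicontinuity of the limiting normal cone map together with $\widebar{c} < 1$, which is guaranteed by Lemma~\ref{CQ_eq} under the hypothesis $N_{\Omega_1}(\widebar{y}) \cap -N_{\Omega_2}(\widebar{y}) = \{0\}$. The second estimate is a linearized prox-regularity inequality, and this is where Theorem~\ref{prox_proof}, which gives prox-regularity of $\Omega_2$ at $\widebar{y}$ when $\rank(\L(\widebar{y})) = r$, enters.

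For \eqref{1_ineq}, I would argue by contradiction. Fix $c \in (\widebar{c}, 1)$ and suppose no admissible $\epsilon$ exists; then taking $\epsilon = 1/k$ produces sequences $x^k \in \Omega_1$ and $z^k \in \Omega_2$ with $x^k, z^k \to \widebar{y}$, together with $u^k \in N_{\Omega_1}(x^k) \cap B$ and $v^k \in -N_{\Omega_2}(z^k) \cap B$ satisfying $\langle u^k, v^k\rangle > c$. Since $B$ is compact, after passing to a subsequence we may assume $u^k \to u^*$ and $v^k \to v^*$ with $u^*, v^* \in B$. Outer semicontinuity of the limiting normal cone (a standard property, see \cite{RW98}) then yields $u^* \in N_{\Omega_1}(\widebar{y})$ and $v^* \in -N_{\Omega_2}(\widebar{y})$, so that $u^* \in N_{\Omega_1}(\widebar{y}) \cap B$ and $v^* \in -N_{\Omega_2}(\widebar{y}) \cap B$. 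Passing to the limit in the strict inequality gives $\langle u^*, v^*\rangle \ge c > \widebar{c}$, which contradicts the definition \eqref{c_def} of $\widebar{c}$ as the maximum of $\langle u, v\rangle$ over exactly these sets. This produces the radius for \eqref{1_ineq}.

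For \eqref{2_ineq}, the first task is to upgrade the pointwise prox-regularity of $\Omega_2$ at $\widebar{y}$, valid for every $\widebar{v} \in N_{\Omega_2}(\widebar{y})$, to a single uniform estimate over all unit normal vectors: I claim there are $\sigma \ge 0$ and $\epsilon_0 > 0$ so that $\langle v, x - z\rangle \le \frac{\sigma}{2}\|x - z\|^2$ whenever $x, z \in \Omega_2 \cap B_{\epsilon_0}(\widebar{y})$ and $v \in N_{\Omega_2}(z) \cap B$. To obtain this, cover the compact set $K := N_{\Omega_2}(\widebar{y}) \cap B$ by the finitely many neighborhoods supplied by prox-regularity at $\widebar{y}$ for centers $\widebar{v}_1, \ldots, \widebar{v}_m \in K$, take $\sigma$ to be the largest of the resulting moduli and $\epsilon_0$ the smallest of the resulting radii; by outer semicontinuity every unit $v \in N_{\Omega_2}(z)$ with $z$ near $\widebar{y}$ lies within this cover, after, if necessary, rescaling the nearest normal vector to the unit ball, a step that at most doubles the distance. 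Given this uniform estimate, I bound $\|x - z\|^2 \le 2\epsilon\|x - z\|$ using $x, z \in B_\epsilon(\widebar{y})$, obtaining $\langle v, x - z\rangle \le \sigma\epsilon\|x - z\|$; setting $\delta := \sigma\epsilon$ and shrinking $\epsilon$ below $\min\{\epsilon_0, (1 - c)/(2\sigma)\}$, with the convention $\delta = 0$ if $\sigma = 0$, forces $\delta \in [0, (1 - c)/2)$, as required.

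The main obstacle is precisely the uniformization in the second part: the definition of prox-regularity only controls normal vectors $v$ lying within a fixed distance of a single $\widebar{v}$, whereas \eqref{2_ineq} must hold simultaneously for all $v \in N_{\Omega_2}(z) \cap B$ at all nearby base points $z$. Resolving this requires the compactness covering of $K$ together with a careful application of outer semicontinuity to guarantee that nearby unit normal vectors actually fall inside the finite cover; this is exactly the step carried out in the proof of \cite[Theorem 5.16]{LLM07}, which is why the lemma is attributed there. The remaining manipulations, namely the contradiction argument for \eqref{1_ineq} and the quadratic-to-linear passage for \eqref{2_ineq}, are routine once these ingredients are in place.
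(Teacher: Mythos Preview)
Your proposal is correct and aligns precisely with the paper's own treatment: the paper does not spell out a proof but simply states that the lemma ``follows from Theorem~\ref{prox_proof} and the same argument as in the proof of \cite[Theorem~5.16]{LLM07},'' and your sketch is exactly that argument---outer semicontinuity of the normal cone for \eqref{1_ineq}, and prox-regularity of $\Omega_2$ (supplied by Theorem~\ref{prox_proof}) uniformized via compactness for \eqref{2_ineq}, then linearized by the $\|x-z\|\le 2\epsilon$ bound.
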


We now prove Theorem~\ref{thm_ap}. The proof follows the same line of arguments as in \cite[Theorem~5.2]{LLM07}.

\begin{proof}
Fix any $c\in (\widebar{c}, 1)$ with $\widebar{c}$ defined as in \eqref{c_def}, and let $\delta$ and $\epsilon$ be given as in Lemma~\ref{lemma_be}.
We first claim that
\begin{equation}\label{claim}
\left.
\begin{array}{ll}
\|z^{t+1} - \widebar{y}\| \le \frac{\epsilon}2\\
\|z^{t+1} - x^t\| \le \frac{\epsilon}2
\end{array}
\right\}\Longrightarrow \|x^{t+1} - z^{t+1}\| \le c_0\|x^t - z^{t+1}\|,
\end{equation}
where $c_0: = c + 2\delta$. To prove this, note from \eqref{aap_iter} and Definition~\ref{def_ps} that
\begin{eqnarray}
x^t - z^{t+1}\in N_{\Omega_1}(z^{t+1}),&&  \ \ z^{t+ 1} - x^{t+1} \in N_{\Omega_2}(x^{t+1}),\label{eq_1}\\
\|z^{t+1} - x^t\| \le \|z^t - x^t\|,&&\ \ \|x^{t+1} - z^{t+1}\| \le \|x^t - z^{t+1}\|. \label{eq_2}
\end{eqnarray}
If $\|x^{t+1} - z^{t+1}\| = 0$ or $\|x^t - z^{t+1}\| = 0$, we then see from the second inequality in \eqref{eq_2} that \eqref{claim} holds trivially. Now we assume that $\|x^{t+1} - z^{t+1}\| \neq 0$ and $\|x^t - z^{t+1}\| \neq 0$. We first notice from \eqref{eq_2}, $\|z^{t+1} - \widebar{y}\| \le \frac{\epsilon}2$ and $\|z^{t+1} - x^t\| \le \frac{\epsilon}2$ that
\begin{eqnarray}
\|x^{t+1} - \widebar{y}\| &\le& \|x^{t+1} - z^{t+1}\| + \|z^{t+1} - \widebar{y}\| \le \|z^{t+1} - x^t\| + \|z^{t+1} - \widebar{y}\| \le \epsilon,\label{ieq_1}\\
\|x^t - x^{t+1}\| &\le& \|x^t - z^{t+1}\| + \|z^{t+1} - x^{t+1}\| \le 2\|x^t - z^{t+1}\|, \label{ieq_2}\\
\|x^t - \widebar{y}\| &\le& \|x^t - z^{t+1}\| + \|z^{t+1} - \widebar{y}\| \le \epsilon. \label{ieq_3}
\end{eqnarray}
Using \eqref{eq_1}, \eqref{ieq_1}  and $\|z^{t+1} - \bar y\| \le \frac{\epsilon}2$, we obtain further that
\begin{eqnarray}
\textstyle\frac{x^t - z^{t+1}}{\|x^t - z^{t+1}\|} \in N_{\Omega_1}(z^{t+1})\cap B && \ {\rm with}\ \ z^{t+1}\in\Omega_1\cap B_{\epsilon}(\widebar{y}) \label{ee_1}\\
\textstyle\frac{x^{t+1} - z^{t+1}}{\|x^{t+1} - z^{t+1}\|}\in - N_{\Omega_2}(x^{t+1})\cap B && \ {\rm with}\ \ x^{t+1}\in\Omega_2\cap B_{\epsilon}(\widebar{y}).\label{ee_2}
\end{eqnarray}
Here, $B$ represents the closed unit ball and $B_{\epsilon}(\widebar{y})$ represents the closed ball with center $\widebar{y}$ and radius $\epsilon$. Furthermore, we see from \eqref{1_ineq}, \eqref{ee_1} and \eqref{ee_2} that
%\begin{equation*}
%\left\langle \frac{x^t - z^{t+1}}{\|x^t - z^{t+1}\|}, \frac{x^{t+1} - z^{t+1}}{\|x^{t+1} - z^{t+1}\|}\right\rangle \le c,
%\end{equation*}
%which can be further written as
\begin{equation}\label{plus_1}
\langle x^t - z^{t+1}, x^{t+1} - z^{t+1}\rangle \le c\|x^t - z^{t+1}\|\|x^{t+1} - z^{t+1}\|.
\end{equation}
On the other hand, in view of \eqref{ieq_1}, \eqref{ieq_3} and \eqref{ee_2}, we can apply \eqref{2_ineq} with $x = x^t$, $z= x^{t+1}$ and $v = \frac{z^{t+1} - x^{t+1}}{\|z^{t+1} - x^{t+1}\|}$ to obtain
\begin{equation}\label{plus_2}
\begin{split}
&\langle x^t - x^{t+1}, z^{t+1} - x^{t+1}\rangle \le \delta\|x^t - x^{t+1}\|\|z^{t+1} - x^{t+1}\| \\
&\le 2\delta\|x^t - z^{t+1}\|\|z^{t+1} - x^{t+1}\|,
\end{split}
\end{equation}
where the second inequality follows from \eqref{ieq_2}. Adding \eqref{plus_1} and \eqref{plus_2}, we obtain
\begin{equation*}
\|x^{t+1} - z^{t+1}\| \le (c + 2\delta)\|x^t - z^{t+1}\|  = c_0\|x^t - z^{t+1}\|,
\end{equation*}
which proves \eqref{claim}.

Note from $c_0= c + 2\delta$  with $c\in(\bar{c}, 1)$ and $\delta\in [0,\frac{1-c}2)$ that $c_0\in(0, 1)$. Choose initial points $x^0$ and $z^0$ such that $\gamma := \|x^0 - \widebar{y}\| + \|z^0 - x^0\| < \frac{(1-c_0)\epsilon}4$. Next, we prove the following inequalities by induction:
\vspace{-2mm}
\begin{eqnarray}
\|z^{t+1} -x^t\| &\le & \gamma {c_0}^t < \textstyle\frac{\epsilon}2, \label{de_1}\\
\|z^{t+1} - \widebar{y}\| &\le& \textstyle2\gamma\frac{1- {c_0}^{t+1}}{1 - c_0} < \frac{\epsilon}2, \label{de_2}\\
\|x^{t+1} - z^{t+1}\| &\le& \gamma {c_0}^{t + 1}. \label{de_3}
\end{eqnarray}

\vspace{-1mm}
\noindent
First, we prove that the above three inequalities hold for $t = 0$. Note from $c_0\in(0, 1)$, the $z$-update in \eqref{aap_iter} and the definition of $\gamma$ that
\begin{equation*}
\textstyle\|z^1 - x^0\| \le \|z^0 - x^0\| \le \gamma < \frac{\epsilon}2\ \ {\rm and}\ \ \|z^1 - \widebar{y}\| \le \|z^1 - x^0\| + \|x^0 - \widebar{y}\| \le  2\gamma < \frac{\epsilon}2.
\end{equation*}
which proves \eqref{de_1} and \eqref{de_2} for $t = 0$.
Then we see from $\|z^1 - x^0\| < \frac{\epsilon}2$, $\|z^1 - \widebar{y}\| < \frac{\epsilon}2$ and \eqref{claim} that
\vspace{-3mm}
\begin{equation*}
\|x^1 - z^1\| \le c_0\|x^0 - z^1\| \le \gamma c_0,
\end{equation*}
which proves \eqref{de_3} for $t = 0$. To prove by induction, we assume that \eqref{de_1}, \eqref{de_2} and \eqref{de_3} hold for some $t\ge 0$. We know from the $z$-update, \eqref{de_1}  and \eqref{de_3} that
\begin{equation*}
\|z^{t+2} - x^{t+1}\| \le \|z^{t+1} - x^{t+1}\| \le \gamma c_0^{t+1} < \textstyle\frac{\epsilon}2.
\end{equation*}
This together with \eqref{de_2} and \eqref{de_3} implies
\begin{eqnarray*}
\|z^{t+2} - \widebar{y}\| &\le& \|z^{t+2} - x^{t+1}\| + \|x^{t+1} - z^{t+1}\| + \|z^{t+1} - \widebar{y}\|\\
& \le& \textstyle\gamma c_0^{t+1} + \gamma c_0^{t+1} + 2\gamma\frac{1-c_0^{t+1}}{1-c_0} = 2\gamma\frac{1 - c_0^{t+2}}{1-c_0} < \frac{2\gamma}{1-c_0} < \frac{\epsilon}2.
\end{eqnarray*}
We then see from $\|z^{t+2} - x^{t+1}\| < \frac{\epsilon}2$, $\|z^{t+2} - \widebar{y}\|  < \frac{\epsilon}2$ and \eqref{claim} that
\vspace{-2mm}
\begin{equation*}
\|x^{t+2} - z^{t+2}\| \le c_0\|x^{t+1} - z^{t+2}\| \le \gamma c_0^{t+2}.
\end{equation*}
Thus, we proved \eqref{de_1}, \eqref{de_2} and \eqref{de_3} for $t+1$. This completes the induction.

Now we prove that the sequence $\{z^0, x^0, z^1, x^1\cdots\}$ is a Cauchy sequence. For any $t$ and $k > s \ge t$, we know from \eqref{de_1} and \eqref{de_3} that
\begin{eqnarray*}
\|z^k - z^s\| &\le& \textstyle\sum_{j=s}^{k-1}\left(\|z^{j+1} - x^j\| + \|x^j - z^j\|\right) \le 2\gamma\left(c_0^s + c_0^{s+1} + \cdots + c_0^{k-1} \right) \le \frac{2\gamma c_0^{t}}{1-c_0},\\
\|x^k - x^s\| &\le& \textstyle\sum_{j=s}^{k-1}\left(\|x^{j+1} - z^{j+1}\|\! +\!\|z^{j+1} - x^j\|\right) \!\le\! \gamma\sum_{j=s}^{k-1}c_0^{j+1} \!+\! \gamma\sum_{j=s}^{k-1}c_0^{j} \!\le\! \frac{\gamma c_0^t(1+c_0)}{1-c_0}.
\end{eqnarray*}
Furthermore, by using \eqref{de_3}, we  have
\begin{eqnarray*}
\|z^k - x^s\| &\le& \|z^k - z^s\| + \|z^s - x^s\| \le \textstyle\frac{2\gamma c_0^{t}}{1-c_0} + \gamma c_0^t,\\
\|x^k - z^s\| &\le& \|x^k - x^s\| + \|x^s - z^s\| \le  \textstyle\frac{\gamma c_0^t(1+c_0)}{1-c_0} + \gamma c_0^t.
\end{eqnarray*}
These prove that the sequence $\{z^0, x^0, z^1, x^1\cdots\}$ is a Cauchy sequence. Therefore, it converges to some $y^*\in\Omega_1\cap\Omega_2$ and we have for any $t$ that
\begin{equation*}
\|z^t - y^*\| \le \textstyle\frac{2\gamma c_0^{t}}{1-c_0}\ \ \ \ {\rm and}\ \ \ \ \|x^t - y^*\| \le \textstyle\frac{\gamma c_0^t(1+c_0)}{1-c_0}.
\end{equation*}
Thus the sequence $\{z^0, x^0, z^1, x^1\cdots\}$ converges $R$-linearly. This completes the proof.
\end{proof}

\end{document}